 \numberwithin{equation}{section}
 \theoremstyle{plain}
\newtheorem{theorem}{Theorem}[section]
\newtheorem{lemma}[theorem]{Lemma}
\newtheorem{proposition}[theorem]{Proposition}
\newtheorem{corollary}[theorem]{Corollary}
\theoremstyle{definition}
\newtheorem{remark}[theorem]{Remark}
\newcommand\eps\varepsilon
\renewcommand\phi\varphi
\newcommand\CE{C\!E}
\newcommand\cwedge{\mathbin{\wedge\hspace{-1.2ex}\wedge}}
\newcommand\C{\mathrel C}
\newcommand\di{{:}} 
\newcommand\tri{{\therefore}}
\newcommand\kk{\mathsf k}
\newcommand\ff{\mathsf f}
\newenvironment{myenum}{\begin{enumerate}

        \setlength{\itemsep}{0pt}}
        {\end{enumerate}}
\newcommand\pritem[1]{\item[\hspace*{4ex}(#1)]}
\newcommand\sbitem[1]{\item[\hspace*{3em}(#1)]}
\begin{document}

    \date{}



\title{Lattice of closure endomorphisms of a Hilbert algebra\thanks{This work was supported by Latvian Science Council, Grant No.\ 27/2012}}
\author{J\=anis C\={\i}rulis  \\ 
   Institute of Mathematics and Computer Science, \\
    University of Latvia, \\
email: \texttt{janis.cirulis@lu.lv}
    }

\maketitle

\begin{abstract}
\noindent
A closure endomorphism of a Hilbert algebra $A$ is a mapping that is simultaneously an endomorphism of and a closure operator on $A$. It is known that the set $\CE$ of all closure endomorphisms of $A$ is a distributive lattice where the meet of two elements is defined pointwise and their join is given by their composition. This lattice is shown in the paper to be isomorphic to the lattice of certain filters of $A$, anti-isomorphic to the lattice of certain closure retracts of $A$, and compactly generated. The set of compact elements of $\CE$ coincides with the adjoint semilattice of $A$; conditions under which two Hilbert algebras have isomorphic adjoint semilattices (equivalently, minimal Brouwerian extensions) are discussed. Several consequences are drawn also for implication algebras. \\

\noindent
\textsf{MSC 2010 Primary:}  03G25 \textsc{secondary:} 06A15, 06D99, 06F35, 08A35  \\
\textsf{Keywords:} adjoint semilattice; closure endomorphism; closure retrac; Hilbert algebra; implication algebra; monomial filter
\end{abstract}

\section{Introduction}  \label{intro}

In the literature, several notions of closure endomorphism of an algebra have been in use. We assume the following definition: an endomorphism of an ordered algebra is its closure endomorphism if it is also a closure operator on the algebra.

Closure endomorphisms of implicative (alias Brouwerian) semilattices were introduced and shown to be useful by Tsinakis in \cite{Tsi} and further studied by the present author in \cite{Tsir}; see also \cite{cle}. It is known, in particular, that the set $CE$ of these endomorphisms is closed under composition $\circ$ and forms even a distributive lattice $(\CE, \circ, \wedge)$ (with meet defined pointwise) embeddable in the filter lattice of $A$ and that closure endomorphisms are precisely the Glivenko operators corresponding to quasi-decompositions of $A$.

Remarkably, closure endomorphisms of implicative semilattices can be described in terms of implication only. This suggests that they could likewise be considered also in Hilbert algebras (known also as positive implication algebras), which are implication subreducts of implicative semilattices. Closure endomorphisms on Hilbert algebras were first mentioned in \cite{multimpl}. More extensively they have been studied by the present author in \cite{mult,adj} and, recently, by Gait\'an in \cite{Gait2}. It has turned out that several general properties of closure endomorphisms of implicative semilattices, as well as of the whole set $\CE$, can be transferred to Hilbert algebras, in part, owing to the view on these algebras as implicative partial semilattices, see \cite{part}. We continue this line on investigation in the present paper.

In the next section we collect some necessary information on Hilbert algebras; however, we assume that the reader is already familiar with the very notion of Hilbert algebra and with elementary arithmetics in these algebras. This information can be found, e.g., in \cite{free,mult,part,Diego,Marsd1,Ras}.  General information about closure endomorphisms on Hilbert algebras, including a few new results, is presented in Section 3. In particular, the closure endomorphisms form a distributive lattice also in this case. In Section 4, this lattice is shown to be isomorphic to the lattice of monomial filters, and anti-isomorphic to the lattice of certain closure retracts of the underlying Hilbert algebra. In the last section attention is paid to the so called finitely generated closure endomorphisms, which form the adjoint semilattice of a Hilbert algebra in the sense of \cite{adj}. The lattice $CE$ (though not necessary complete) turns out to be compactly generated by the finitely generated closure endomorphisms, and the adjoint semilattices of two Hilbert algebras are isomorphic if either the filter lattices or the the endomorphism monoids of these algebras are isomorphic.

Like \cite{mult,part}, the so called dot notation is used in the paper to reduce the number of grouping parentheses in expressions (terms of Hilbert algebras). For instance, any of the expressions
\begin{gather*}
x \to y. \to .z \to x\di \to .(x \to \di y \to .z \to x) \to y, \\
(x \to y. \to .z \to x) \to (x \to \di y \to .z \to x\tri \to y)
\end{gather*}
is a condensed version of
\[
((x \to y) \to (z \to x)) \to ((x \to (y \to (z \to x))) \to y) \enspace .
\]

\section{Hilbert algebras}  \label{prelim}

A \emph{Hilbert algebra} $(A,\to,1)$ may be treated as a poset with the greatest element 1 equipped with a binary operation $\to$ such that
\begin{gather*}
x \to y = 1 \text{ if and only if } x \le 1, \\
x \le y \to x, \quad x \to .y \to z \le x \to y. \to .x \to z \enspace .
\end{gather*}
The join and the meet of elements $a, b \in A$, when they exist, will be denoted by $a \vee b$, resp., $a \wedge b$. A Hilbert algebra is said to be commutative, or an \emph{implication} (or Tarski) \emph{algebra}, if it satisfies any of the equivalent identities
\[
x \to y. \to x = x, \quad x \to y. \to y = y \to x. \to x \enspace .
\]
In an implication algebra, always $x \to y. \to y = x \vee y$, and $x \wedge y$ exists whenever the pair $x,y$ has a lower bound. A \emph{block} of a Hilbert algebra $A$ is any its subalgebra that itself happens to be a bounded implication algebra. By \cite[Theorem 2.1]{part}, a subset $B$ of $A$ is a block if and only if, for some subalgebra $X$ and an element $p \in A$, $B = \{x \to p\colon x \in X\}$.

An \emph{implicative} (or Brouwerian) \emph{semilattice} $(A,\wedge,\to,1)$ is a lower semilattice with the greatest element $1$ in which $a \to b$ is the pseudocomplementation of $a$ relative to $b$:
\[
a \to b \le c \text{ if and only if } a \wedge b \le c ;
\]
see \cite{Cu,N}. An implicative semilattice is always a Hilbert algebra; more exactly, Hilbert algebras are just $(\to,1)$-subreducts of implicative semilattices \cite[Proposition 2.2]{part}.

Let $(A, \to, 1)$ be a Hilbert algebra, and let $\le$ be its natural order relation. The compatibility relation on Hilbert algebras was introduced in \cite{Marsd1}. An equivalent definition is used in \cite{mult,part}: elements $a, b \in A$ are said to be \emph{compatible} (in symbols, $a \C b$) if they have a lower bound $c$ such that $a \le b \to c$. This lower bound is necessary a meet of $a$ and $b$; we call also a meet arising in this way {compatible}. A subset of $A$ is its \emph{relative subsemilattice} if it is closed under existing compatible meets. To emphasize that the meet of $a$ and $b$ is compatible, it will, following \cite{adj}, occasionally be written as $a \cwedge b$. $A$ is an implicative semilattice if and only if all meets in $A$ exist and are compatible (see \cite[Theorem 11]{Marsd2}, also \cite[Section 3]{part}); defined in this way, implicative semilattices have been called also (H)-Hilbert algebras and Hertz algebras. Compatibility of sets containing more than two elements is discussed in \cite{adj}.  See \cite[Proposition 3.1]{part} for the following property of blocks.
\begin{proposition}
Any two elements of a block of an implication algebra $A$ are compatible, and their meet in $A$ coincides with their meet in the block.
\end{proposition}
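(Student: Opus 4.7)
The plan is to produce an explicit element $m \in B$ that witnesses the compatibility of $a, b$ in $A$ and that simultaneously realises the meet in $A$ and in $B$.

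Using the characterisation $B = \{x \to p \colon x \in X\}$ quoted in the excerpt, I first note that $p = 1 \to p \in B$ and that $p \leq x \to p$ for every $x$, so $p$ is the bottom of $B$; in particular $p \leq a$ and $p \leq b$. I then propose
\[
m := (a \to (b \to p)) \to p,
\]
which lies in $B$ because the subalgebra $B$ is closed under $\to$.

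Three short Hilbert-algebra calculations should then establish that $m$ is a compatible meet of $a$ and $b$ in $A$. For $m \leq a$: the commutativity identity $(a \to z) \to a = a$ with $z = b \to p$ gives $(a \to (b \to p)) \to a = a$, so monotonicity of $(a \to (b\to p)) \to (\cdot)$ together with $p \leq a$ yields $m \leq (a \to (b \to p)) \to a = a$. Symmetrically, using the Hilbert swap $a \to (b \to p) = b \to (a \to p)$, one gets $m \leq b$. For $a \leq b \to m$: the same swap converts $b \to m$ into $(a \to (b \to p)) \to (b \to p)$, and the standard Hilbert inequality $x \leq (x \to y) \to y$ with $x = a$, $y = b \to p$ closes the argument. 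Thus $a \C b$, and by the remark in the excerpt that any compatibility witness is necessarily a meet, $m = a \wedge b$ in $A$.

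Since $m \in B$ is a lower bound of $a, b$ in $A$ and the order on $B$ is inherited from $A$, every $B$-lower bound of $a, b$ is also an $A$-lower bound and hence lies below $m$; so $m$ is the meet of $a, b$ in the block $B$ as well. The only real obstacle is guessing the right formula for $m$; once that is in hand, each verification is a brief manipulation using the basic Hilbert-algebra identities, the commutativity axiom $(x \to y) \to x = x$, and the fact that $p$ is the minimum of $B$.
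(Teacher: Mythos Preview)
Your argument is correct. The paper does not supply its own proof of this proposition; it merely cites \cite[Proposition 3.1]{part}, so there is nothing internal to compare against. Your explicit construction of the witness $m = (a \to (b \to p)) \to p$ is clean, and each of the three verifications ($m \le a$, $m \le b$, $a \le b \to m$) goes through exactly as you describe.

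One remark is worth recording. Although the proposition as printed says ``implication algebra $A$'', the later use of it in the paper (to conclude that $\phi x$ and $\psi x$ are compatible for multipliers $\phi,\psi$ on an arbitrary Hilbert algebra) shows that the intended hypothesis is merely that $A$ is a Hilbert algebra, with only the block $B$ required to be an implication algebra. Your proof still works in that generality: the only place you invoke the commutativity identity $(x \to y) \to x = x$ is with $x = a$, $y = b \to p$ (and the symmetric instance with $x = b$, $y = a \to p$), and all of these elements lie in $B$. Since $B$ is by definition a subalgebra that is an implication algebra, the identity holds in $B$, and hence the resulting equalities hold in $A$. It would strengthen your write-up to make this localisation to $B$ explicit, so that the argument visibly covers the case actually needed later in the paper.
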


A \emph{filter} (an implicative filter, a deductive system) of $A$ is a subset $J$ containing $1$ and such that $y \in J$ whenever $x, x \to y \in J$. In particular, the sets $\{1\}$ and $A$ are filters. According to \cite[Lemma 3.2]{part}, $J$ is a filter if and only if it is a semilattice filter, i.e., an upwards closed relative subsemilattice of $A$. An essentially equivalent characteristic of filters is the one given, e.g., in Theorem 3.4 of \cite{HJ}: a filter is a non-empty subset $J$ of $A$ such that $x \le y \to z$ implies $z \in J$ for all $x,y \in J$.

Let $[X)$ stand for the filter generated by a subset $X$, i.e. the least filter including $X$. All filters of a Hilbert algebra form a (complete, hence, bounded) distributive lattice with intersection as meet; we denote by $\sqcup$ the join operation of filters.

Every filter $J$ induces a congruence $\theta_J$ defined by
\[
(a,b) \in \theta_J \text{ if and only if } b \to a \in J \text{ and } a \to b \in J .
\]
We denote by $a/\!J$ the congruence class of $\theta_J$ containing $a$. A filter $J$ is said to be \emph{monomial} if every class $a/\!J$ has the greatest element. For instance, the filters $\{1\}$ and $A$ are monomial.

\begin{remark}
In \cite{Sch0}, Schmidt introduced the notion of monomial congruence for join semilattices. Katri\v{n}\'ak in the dual situation of meet semilattices termed filters corresponding to congruences with greatest element in every congruence class comonomial \cite{comon}; we followed this definition in \cite{cle,Tsir}. However, latter (e.g., in \cite{binpairs}) Schmidt used the term `comonomial' for those equivalences (on arbitrary posets), where every equivalence class had the least element. This meaning of the term now seems to be more common; so, we assume in the present paper Schmidt's definition of a monomial congruence.
\end{remark}

Let $J_a$ stand for the set $\{x\colon x \to a \in J\}$; then $a/J = \{x \in J_a: a \to x \in J\}$. By an ideal in $A$ we mean a hereditary, i.e, downwards closed set (or down-set), which is closed also under existing joins.

\begin{lemma}   \label{cofin}
If $J$ is a filter, then the set $J_a$ is an ideal of $A$. Moreover, if one of the maxima $\max a/\!J$ and $\max J_a$ exists, then the other one exists and both are equal.
\end{lemma}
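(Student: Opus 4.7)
The lemma has two parts: that $J_a$ is an ideal, and that the two maxima $\max a/\!J$ and $\max J_a$ exist together and agree when either exists. The plan is to handle these in turn.

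For the ideal claim, heredity of $J_a$ is immediate from antitonicity of $\to$ in the first argument: if $y \le x$ then $x \to a \le y \to a$, so $x \to a \in J$ forces $y \to a \in J$ by upward closure of the filter. The substantive step is closure under existing joins. Given $x, y \in J_a$ with $x \vee y$ existing in $A$, I plan to prove
\[
y \to a \;\le\; (x \to a) \to ((x \vee y) \to a) ,
\]
after which upward closure together with modus ponens in $J$ yields $(x \vee y) \to a \in J$. Repeated use of the swap identity $u \to (v \to w) = v \to (u \to w)$ recasts this goal as $x \vee y \le (y \to a) \to ((x \to a) \to a)$; since $x \vee y$ is the supremum of $\{x,y\}$, it suffices to verify the inequality separately for $x$ and for $y$, and in each of those cases one further swap collapses the statement to a basic identity of Hilbert algebras (either $u \le v \to u$ or $u \le 1$). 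This reduction is the main technical point of the lemma, although the computation itself is very short once the right swaps have been identified.

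For the second part, I first record that $a/\!J \subseteq J_a$, which is visible directly from the description $a/\!J = \{x \in J_a : a \to x \in J\}$, and that $a \in J_a$ because $a \to a = 1 \in J$. Consequently, if $m = \max J_a$ exists then $m \ge a$ forces $a \to m = 1 \in J$, while $m \in J_a$ already supplies $m \to a \in J$, so $m$ lies in $a/\!J$; the inclusion $a/\!J \subseteq J_a$ then makes $m$ the top element of $a/\!J$ as well.

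The converse direction is where a small device is needed. Given $m = \max a/\!J$ and an arbitrary $y \in J_a$, the plan is to exhibit an element of $a/\!J$ that dominates $y$, so that maximality of $m$ in $a/\!J$ forces $y \le m$. The natural candidate is $z := (y \to a) \to a$: one has $y \le z$ in every Hilbert algebra, $a \le z$ yields $a \to z = 1 \in J$ for free, and one application of the swap identity gives $y \to a \le z \to a$, whence $z \to a \in J$ by upward closure. Thus $z \in a/\!J$, and $y \le z \le m$ completes the argument that $m = \max J_a$.
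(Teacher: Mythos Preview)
Your argument is correct. The treatment of heredity and of the second assertion (via cofinality of $a/\!J$ in $J_a$ through the element $(y\to a)\to a$) matches the paper's proof essentially verbatim; the paper states the cofinality once and lets it cover both directions, while you spell the two directions out separately, but the content is the same.

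The one place where your route genuinely differs is closure of $J_a$ under existing joins. The paper does not compute with the exchange law; instead it observes that $x\to a$ and $y\to a$ lie in the block $\{u\to a: u\in A\}$, hence are compatible, so their compatible meet $z:=(x\to a)\cwedge(y\to a)$ exists and belongs to $J$ (filters being relative subsemilattices). From $z\le x\to a$ and $z\le y\to a$ one gets $x,y\le z\to a$, whence $x\vee y\le z\to a$ and finally $z\le (z\to a)\to a\le (x\vee y)\to a$, so $(x\vee y)\to a\in J$. Your approach instead proves the single inequality
\[
y\to a \;\le\; (x\to a)\to\bigl((x\vee y)\to a\bigr)
\]
by repeated exchange, reducing it to $x\vee y\le (y\to a)\to\bigl((x\to a)\to a\bigr)$ and checking the two generators. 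This is more elementary---it uses nothing beyond the exchange identity, monotonicity, and modus ponens in $J$, and in particular avoids the compatibility/block machinery and the characterisation of filters as relative subsemilattices. The paper's argument, on the other hand, is more structural and ties the result to the implicative-partial-semilattice viewpoint that the paper emphasises throughout.
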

\begin{proof}
Evidently, $J_a$ is hereditary: if $x \in J_a$ and $y \le x$, then $x \to a \le y \to a \in J$ and $y \in J_a$. If $x,y \in J_a$ and $x \vee y$ exists, then $z := x \to a. \cwedge .y \to a \in J$. Further, $z \le x \to a, y \to a$ and $x \to a. \to a, y \to a. \to a \le z \to a$; so, $x, y \le z \to a$, $x \vee y \le z \to a$  and $z \le z \to a. \to a \le x \vee y \to a$. Thus $x \vee y \to a \in J$ and  $x \vee y \in J_a$. Moreover, $a/\!J$ is a cofinal  subset of $J_a$: if $x \in J_a$ and $x' : = x \to a. \to a$, then $x' \to a \in J$, $a \to x' = 1 \in J$ and, consequently, $x' \in a/\!J$; in addition, $x \le x'$. Therefore, if one of the subsets $a/\!J$ and $J_a$ has the greatest element, then it is the greatest one also in the other. 
\end{proof}

Following \cite{multimpl,mult}, we call a \emph{(right) multiplier} on $A$ any mapping $\phi\colon A \to A$ which satisfies the condition
\begin{equation}    \label{mlt}
\phi(x \to y) = x \to \phi y .
\end{equation}
For example, the identity mapping $\eps\colon x \mapsto x$ and the unit mapping $\iota\colon x \mapsto 1$ are multipliers. Also, for every $p \in A$, the mappings $\alpha_p$, $\beta_p$ and $\delta_p$ defined by
\[
\alpha_px := p \to x, \quad \beta_px := x \to p. \to x, \quad \delta_px := p \to x. \to x
\]
are multipliers. Therefore, multipliers are just operations on $A$ commuting with every translation $\alpha_p$. The following list of  properties of multipliers goes back to \cite[Lemma 3.1]{mult} \newpage

\begin{proposition}    \label{multichar}
For all multipliers $\phi$, $\psi$,
\begin{myenum}
\item $\phi1 = 1$,
\item $x \le \phi x$,
\item $\phi x = \phi x \to x. \to x$,
\item $\phi x =  \phi x \to x. \to \phi x$,
\item $\phi\phi x = \phi x$,
\item $\phi x  = \phi x \to \psi x. \to \phi x$,
\item $\psi\phi x = \phi x \to x. \to \psi x$,
\item $\psi\phi x = \phi\psi x$,
\item $\psi\phi x = \phi x \to \psi x. \to \psi x$.
\end{myenum}
\end{proposition}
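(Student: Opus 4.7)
The plan is to prove (a)--(i) in sequence from the defining equation (\ref{mlt}) and a few standard Hilbert algebra identities: weakening $x \le y \to x$, exchange $a \to (b \to c) = b \to (a \to c)$, contraction $a \to (a \to b) = a \to b$, the derived identity $((a \to b) \to b) \to b = a \to b$, and the self-distribution axiom $a \to (b \to c) \le (a \to b) \to (a \to c)$. Items (a) and (b) are direct substitutions: setting $y = 1$ in (\ref{mlt}) gives $\phi 1 = x \to \phi 1$, which on specialising $x = \phi 1$ yields (a); setting $y = x$ then gives $1 = \phi 1 = x \to \phi x$, which is (b).

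The heart of the proof is (c). The inequality $\phi x \le (\phi x \to x) \to x$ is immediate from weakening. For the reverse, apply (\ref{mlt}) to rewrite $((\phi x \to x) \to x) \to \phi x = \phi(((\phi x \to x) \to x) \to x)$; the inner argument collapses via the triple-implication identity (taking $a = \phi x$, $b = x$) to $\phi x \to x$, and a final use of (\ref{mlt}) gives $\phi(\phi x \to x) = \phi x \to \phi x = 1$, proving (c). Item (e) then follows by substituting (c) into itself and using contraction: $\phi\phi x = \phi((\phi x \to x) \to x) = (\phi x \to x) \to \phi x$, which by (c) again equals $(\phi x \to x) \to ((\phi x \to x) \to x) = (\phi x \to x) \to x = \phi x$. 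Comparing the second and last terms of this chain yields $(\phi x \to x) \to \phi x = \phi x$, which is (d).

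Items (f)--(i) are routine consequences. For (g), apply $\psi$ to (c) and use (\ref{mlt}): $\psi\phi x = \psi((\phi x \to x) \to x) = (\phi x \to x) \to \psi x$. For (h), expand $\psi x$ via (c) and then use exchange and (c) once more: $(\phi x \to x) \to \psi x = (\phi x \to x) \to ((\psi x \to x) \to x) = (\psi x \to x) \to \phi x = \phi\psi x$. For (f) and (i), note that (b) applied to $\psi$ gives $x \le \psi x$, hence $\phi x \to x \le \phi x \to \psi x$, and contravariance yields $(\phi x \to \psi x) \to c \le (\phi x \to x) \to c$ for every $c$. Taking $c = \phi x$ and using (d) gives the nontrivial direction of (f); taking $c = \psi x$ and using (g) gives one direction of (i). The reverse direction of (i) follows from the self-distribution axiom with $A = \psi x \to x$, $B = \phi x \to x$, $C = x$, after rewriting $(\psi x \to x) \to (\phi x \to x)$ as $\phi x \to \psi x$ using exchange and (c).

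The main potential obstacle is arranging (c) correctly: one has to notice that the triple-implication identity telescopes $((\phi x \to x) \to x) \to x$ back to $\phi x \to x$, which the multiplier then sends to $1$. Once (c) and (e) are in hand, every other item reduces to a short manipulation using exchange, contraction, and contravariance of $\to$ in its first argument.
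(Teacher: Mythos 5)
Your proof is correct: every step checks out against the axioms the paper lists for Hilbert algebras (weakening, exchange, contraction, self-distribution, and the derived identity $((a \to b) \to b) \to b = a \to b$), and the logical dependencies among the items (a)--(i) are consistently ordered. Note, however, that the paper itself gives no proof of this proposition at all --- it only cites Lemma 3.1 of \cite{mult} --- so there is no in-paper argument to compare yours against; your derivation is a valid self-contained replacement for that citation. The one place worth double-checking, and which you handle correctly, is item (c): the reverse inequality does hinge on telescoping $(\phi x \to x. \to x) \to x$ back to $\phi x \to x$ so that the multiplier identity \eqref{mlt} sends the whole expression to $\phi x \to \phi x = 1$, and your use of self-distribution with $A = \psi x \to x$, $B = \phi x \to x$, $C = x$ for the nontrivial half of (i) is exactly the right instantiation.
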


Notice that, in virtue of (e), the fixpoint set $F_\phi$ of a multiplier coincides with its range and is a subalgebra of $A$. Its kernel $\{x \in A\colon \phi x = 1\}$ will be denoted by $K\phi$. Evidently, $F_\phi = \{x \in A\colon \phi x \le x\}$ and $K_\phi \cap F_\phi = \{1\}$.

The set $M$ of all multipliers may by ordered pointwise; moreover, it is closed under composition $\circ$ and pointwise defined implication. According to Theorem 3.2 of \cite{mult}, the algebra $(M, \to, \iota)$ is a bounded implication algebra with the least element $\eps$, where
\[
\phi \le \psi \text{ if and only if } \phi \circ \psi = \psi .
\]
For any $x$, the subset $M(x) := \{\phi(x)\colon \phi \in M\}$ is a block; in particular, elements $\phi x$ and $\psi x$ of $A$ with $\phi,\psi \in M$ always are compatible. In effect, $(M,\circ,\wedge,\eps,\iota)$ is a Boolean lattice with pointwise defined meet and complementation defined by $- \phi := \phi \to \eps$ (\cite[Corollary 3.3]{mult}).

\section{Closure endomorphisms} \label{isots}

In what follows, let $A$ be some fixed Hilbert algebra.

The multipliers $\eps, \iota, \alpha_p, \beta_p$ are also examples of closure endomorphisms. On the other hand, every closure operator on $A$ is a multiplier; see Proposition \ref{isotmult1} below. The set $\CE$ of all closure endomorphism is closed under composition and meet; thus, it becomes a sublattice of the lattice of multipliers \cite[Corollary 3.5]{mult}. Therefore, the bounded lattice $(\CE,\circ,\wedge,\eps,\iota)$ is distributive.

Being an endomorphism, every mapping $\phi \in \CE$ satisfies the conditions
\begin{gather}
\mbox{if $x \le y$, then $\phi x \le \phi y$}, \label{iso} \\
\phi(x \to y) = \phi x \to \psi x, \label{end} \\
\mbox{if $x \le y \to z$, then $\phi x \le \phi y \to \phi z$}. \label{leto}
\end{gather}
Like implication-preserving mappings on implicative semilattices, $\phi$ is also meet-preserving (multiplicative) in the following sense:
\begin{equation}    \label{multi}
\mbox{if $x \C y$, then $\phi x \C \phi y$ and $\phi(x \wedge y) = \phi x \wedge \phi y$}.
\end{equation}
Indeed, if  $x \C y$, then, (i) $\phi(x \wedge y)$ is a lower bound of $\phi x$ and $\phi y$; (ii) $x \le y \to .x \wedge y$ and, furthermore,
\(
\phi x \le   \phi y \to \phi(x \wedge y)
\)
by Equation \eqref{leto}. Therefore, $\phi(x \wedge y) = \phi x \cwedge \phi y$. Due to this property, every fixpoint set $F_\phi$ is an example of a relative subsemilattice of $A$.

We list two more conditions on a mapping $\phi$:
\begin{gather}
\phi x \to \phi y = x \to \phi y \label{cls}, \\
\mbox{$\phi x = \alpha_p x$ for an appropriate $p$ (dependent on $x$)} . \label{lc}
\end{gather}
 The following observation is a part of Theorem 4.1 in \cite{mult} (and goes back to \cite[Theorem 3]{multimpl}).

\begin{proposition} \label{isotmult1}
A mapping $\phi\colon A \to A$ is a closure endomorphism if and only if any of the following conditions is fulfilled:
\begin{myenum}
\item $\phi$ is an isotonic multiplier,
\item any two of the identities \textup{\eqref{mlt}, \eqref{end}} and \textup{\eqref{cls}} hold,
\item conditions \textup{\eqref{leto}} and \textup{\eqref{lc}} are satisfied.
\end{myenum}
\end{proposition}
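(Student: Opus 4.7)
The plan is to establish the three equivalences (closure endomorphism) $\iff$ (a) $\iff$ (b) $\iff$ (c) in sequence, leveraging Proposition~\ref{multichar} and two elementary Hilbert-algebra facts: the exchange law $a \to (b \to c) = b \to (a \to c)$ and its immediate consequence $a \le (a \to b) \to b$.

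For (closure endomorphism) $\iff$ (a): a closure endomorphism is isotonic by definition, and rewriting the endomorphism identity via extensivity and idempotence yields the multiplier identity through the chain $\phi x \to \phi y \le x \to \phi y \le \phi(x \to \phi y) = \phi x \to \phi\phi y = \phi x \to \phi y$, forcing equality throughout. Conversely, an isotonic multiplier inherits $\phi 1 = 1$, extensivity, and idempotence from Proposition~\ref{multichar}(a,b,e), and to promote its multiplier identity to an endomorphism identity it suffices to establish $x \to \phi y = \phi x \to \phi y$: the $\ge$ direction follows from $x \le \phi x$, while for the $\le$ direction I would apply $\phi$ to the tautology $x \le (x \to \phi y) \to \phi y$, then simplify $\phi((x \to \phi y) \to \phi y)$ to $(x \to \phi y) \to \phi y$ via the multiplier identity and idempotence, and finally rearrange through the exchange law.

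For (closure endomorphism) $\iff$ (b): I would begin by observing that any two of \eqref{mlt}, \eqref{end}, \eqref{cls} algebraically entail the third by substitution, so (b) is equivalent to all three holding simultaneously. That conjunction supplies both the multiplier identity and the endomorphism identity (the latter yielding $\phi 1 = 1$ via $\phi(x \to x)$ and isotonicity via $x \to y = 1$), which together reproduce (a).

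For (closure endomorphism) $\iff$ (c): the forward direction is routine---\eqref{leto} follows by applying $\phi$ to $x \le y \to z$ and invoking the endomorphism identity plus isotonicity, and \eqref{lc} is supplied by Proposition~\ref{multichar}(c) with $p := \phi x \to x$. The converse will be the main obstacle. Extensivity ($\phi x = p \to x \ge x$), $\phi 1 = p \to 1 = 1$, and isotonicity (from \eqref{leto} applied to $x \le 1 \to y$) all come easily, but deriving the full multiplier identity from the \emph{pointwise} translation behavior in \eqref{lc} together with the implicative monotonicity \eqref{leto} is delicate; my plan, following \cite[Theorem~4.1]{mult}, is to use the Hilbert identity $((a \to b) \to b) \to b = a \to b$ to rewrite $\phi x$ canonically as $(\phi x \to x) \to x$, then apply \eqref{leto} to judiciously chosen Hilbert-axiom instances to transport this representation across $\to$ and arrive at $\phi(x \to y) = x \to \phi y$.
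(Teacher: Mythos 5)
First, a point of comparison: the paper does not prove this proposition at all --- it is quoted verbatim as part of Theorem 4.1 of \cite{mult}, so there is no in-paper argument to measure yours against. Your handling of the equivalences with (a) and (b) is correct and essentially complete: the chain $\phi x \to \phi y \le x \to \phi y \le \phi(x \to \phi y) = \phi x \to \phi\phi y = \phi x \to \phi y$ does extract \eqref{cls} (hence \eqref{mlt}) from a closure endomorphism, the converse via applying $\phi$ to $x \le (x \to \phi y) \to \phi y$ and using exchange is sound, and the observation that any two of \eqref{mlt}, \eqref{end}, \eqref{cls} entail the third by substitution settles (b).

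The gap is in (c) $\Rightarrow$ closure endomorphism, exactly where you flag the difficulty and then stop. From \eqref{leto} and \eqref{lc} one does get, beyond what you list, idempotence and the inequality $\phi(x \to y) \le \phi x \to \phi y \le x \to \phi y$ fairly cheaply (for idempotence: \eqref{leto} applied to $\phi x \to x \le \phi x \to x$ gives $\phi\phi x \le \phi(\phi x \to x) \to \phi x \le (\phi x \to x) \to \phi x = \phi x$, using $\phi x = (\phi x \to x) \to x$ and contraction). But the reverse inequality $x \to \phi y \le \phi(x \to y)$ is the entire content of the implication, and ``judiciously chosen Hilbert-axiom instances'' does not say how to obtain it. The obstruction is structural: \eqref{leto}, isotonicity and extensivity only ever yield \emph{upper} bounds on values of $\phi$, whereas here one needs a lower bound, specifically $\phi(\phi y \to y) = 1$. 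Granting that single fact, the step closes at once: apply \eqref{leto} to the Hilbert theorem $\phi y \to y \le (x \to \phi y) \to (x \to y)$ and use that $x \to \phi y$ is a fixpoint of $\phi$ (which follows from the chain $x \to \phi y \le \phi(x \to \phi y) \le \phi x \to \phi\phi y \le x \to \phi y$) to get $x \to \phi y \le \phi(x \to y)$. Proving $\phi(\phi y \to y) = 1$ from \eqref{lc}, however, amounts to showing that the witness $p$ with $\phi(\phi y \to y) = p \to (\phi y \to y)$ satisfies $p \le \phi y \to y$, and your sketch contains no mechanism for that; this is the one nontrivial idea in the whole proposition. You should either supply it explicitly or do what the paper does and cite Theorem 4.1 of \cite{mult} for the statement rather than reprove it.
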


The subsequent characteristics of kernels and fixpoint sets of isotonic multipliers also were announced in \cite{multimpl}. A subset $S$ of $A$ was said to be \emph{special} in \cite{cle,multimpl}, if
\begin{equation}    \label{spec}
\mbox{to every $a \in A$ and $b \in S$, there is $p \in A$ such that $\alpha_p a \in S$ and $\alpha_p b = b$}.
 \end{equation}

\begin{theorem} \label{isotmult2}
The following assertions about a multiplier $\phi$ are equivalent:
\begin{myenum}
\item $\phi$ is isotonic, hence, a closure endomorphism,
\item the kernel $K_\phi$ of $\phi$ is a filter,
\item the fixpoint set $F_\phi$ of $\phi$ is special.
\end{myenum}
\end{theorem}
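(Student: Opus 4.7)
The plan is to close the cycle $(a) \Rightarrow (c) \Rightarrow (b) \Rightarrow (a)$, using throughout the multiplier arithmetic of Proposition \ref{multichar}; for $(a) \Rightarrow (c)$ I will also need identity \eqref{cls}, which by Proposition \ref{isotmult1} comes for free with any closure endomorphism. Two small facts will recur: $\phi u \to u$ always lies in $K_\phi$ (applying $\phi$ turns it into $\phi u \to \phi u = 1$), and $K_\phi \cap F_\phi = \{1\}$, as noted just before the theorem.

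For $(a) \Rightarrow (c)$, given $a \in A$ and $b \in F_\phi$, I would set $p := \phi a \to a$. By Proposition \ref{multichar}(c) this makes $\alpha_p a = \phi a$, which is in $F_\phi$ by idempotence, and $\phi p = \phi a \to \phi a = 1$ places $p$ in $K_\phi$. Identity \eqref{cls}, applied with $x = p$, $y = b$, then collapses to $p \to b = \phi p \to \phi b = 1 \to b = b$, delivering $\alpha_p b = b$. For $(b) \Rightarrow (a)$, assume $K_\phi$ is a filter, hence upwards closed, and suppose $x \le y$. The multiplier identity forces $x \le \phi y$, so $\phi x \to x \le \phi x \to \phi y$; since $\phi x \to x \in K_\phi$, upwards closure puts $\phi x \to \phi y$ into $K_\phi$ too, and one further use of the multiplier identity together with idempotence rewrites $\phi(\phi x \to \phi y) = 1$ as $\phi x \to \phi y = 1$, i.e., $\phi x \le \phi y$.

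The subtle step is $(c) \Rightarrow (b)$; there $1 \in K_\phi$ is automatic and only closure under modus ponens needs work. Given $x, x \to y \in K_\phi$, the multiplier identity forces $x \le \phi y$, and since $\phi y \in F_\phi$ I can apply the specialness of $F_\phi$ to the pair $a := x$, $b := \phi y$, obtaining some $p$ with $p \to x \in F_\phi$ and $p \to \phi y = \phi y$. A one-line calculation shows $\phi(p \to x) = p \to \phi x = p \to 1 = 1$, placing $p \to x$ in $K_\phi$ as well; the identity $K_\phi \cap F_\phi = \{1\}$ then forces $p \to x = 1$, so $p \le x \le \phi y$ and consequently $\phi y = p \to \phi y = 1$. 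I expect the main obstacle in the whole argument to lie precisely in this direction, in spotting that the correct input to the specialness property is the pair $(x, \phi y)$; once that choice is made, the collision $p \to x \in K_\phi \cap F_\phi$ forces everything.
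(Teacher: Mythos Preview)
Your argument is correct. The $(a)\Rightarrow(c)$ step coincides with the paper's: same witness $p=\phi a\to a$, with only cosmetic differences in how $\alpha_p b=b$ is verified (you invoke \eqref{cls}, the paper unwinds via \eqref{mlt} and \eqref{end}). The remaining two implications differ in organisation. The paper does not close a cycle but returns from each of $(b)$ and $(c)$ directly to $(a)$: for $(b)\Rightarrow(a)$ it observes that $x\le\phi a$ iff $x\to a\in K_\phi$, and then uses upward closure of $K_\phi$ on $x\to a\le x\to b$; for $(c)\Rightarrow(a)$ it applies specialness to the pair $(a,\phi b)$ when $a\le b$ and reads off $\phi a\le p\to\phi a=\phi(p\to a)=p\to a\le p\to\phi b=\phi b$. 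Your $(b)\Rightarrow(a)$ is a clean variant, exploiting $K_\phi\cap F_\phi=\{1\}$ after pushing $\phi x\to x\le\phi x\to\phi y$ into $K_\phi$. Your $(c)\Rightarrow(b)$ is the one genuinely new direction; it rests on the same device as the paper's $(c)\Rightarrow(a)$---feeding specialness the pair (element, $\phi$ of something already in $F_\phi$)---but aims at the filter condition rather than isotonicity, and the collapse via $K_\phi\cap F_\phi=\{1\}$ is a nice touch not present in the paper. Either route is equally short; yours has the minor aesthetic advantage of a single cycle, the paper's the advantage that $(c)\Rightarrow(a)$ avoids the detour through the modus-ponens form of the filter axiom.
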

\begin{proof}
Assume that $\phi$ is a multiplier on $A$. If it is isotonic, then, by the preceding proposition, it is a closure endomorphism; hence, $K_\phi$ is a filter. Moreover, $F_\phi$ is special: if $b \in F_\phi$ and $p: = \phi a \to a$ for some $a$, then, in virtue of Proposition \ref{multichar}(c), $p \to a = \phi a \in F_\phi$, while
\( p \to b = \phi a \to a. \to \phi b = \phi(\phi a \to a. \to b) = \phi a \to \phi a. \to \phi b = \phi b = b \);
see Equations \eqref{mlt} and \eqref{end}.
If, conversely, $K_\phi$ is a filter and $a \le b$, then $x \to a \le x \to b$ for every $x \in A$  and, further, $x \to a \in K_\phi$ only if $x \to b \in K_\phi$. It follows that $x \le \phi a$ only if $x \le \phi b$; thus, $\phi a \le \phi b$, as needed.
At last, if the set $F_\phi$ is special and $a \le b$, then $a \le \phi b$ (Proposition \ref{multichar}(b)) and, for an appropriate $p$, $p \to a \in F_\phi$ and $p \to \phi b = \phi b$. But then $\phi a \le p \to \phi a = \phi(p \to a) = p \to a \le p \to \phi b = \phi b$.
\end{proof}

The kernel of every closure endomorphism $\alpha_p$ is the principal filter $[p)$, and conversely. For this reason, we, following \cite{multimpl,adj}, call these closure endomorphisms \emph{principal}. Let $\CE^\alpha$ stand for the set of all such closure endomorphisms. We say that a subset of $A$ is \emph{$\alpha$-closed} if it is closed under all principal closure endomorphisms. For example, every filter of $A$ is $\alpha$-closed. Observe that an $\alpha$-closed subset of $A$ is always a subalgebra of $A$.

The next characteristic of closure endomorphisms and the corollary to it are suggested by similar observations in \cite{Tsi} for implicative semilattices (Lemma 3.3 and Corollary 3.4 therein). They were extended to Hilbert algebras in Lemma 5 and,  respectively, Proposition 6 of \cite{Gait2}. We provide a more compact proof of the first result.

\begin{lemma}
An endomorphism $\phi$ of $A$ is a closure operator if and only if,  for every idempotent endomorphism $\tau$, the endomorphism $\tau\phi$ also is idempotent.
\end{lemma}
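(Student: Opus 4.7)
The plan is to prove each direction by exploiting identity \eqref{cls} --- namely $\phi x \to \phi y = x \to \phi y$ --- which holds for every closure endomorphism $\phi$ because Proposition \ref{isotmult1} tells us that such a $\phi$ is both an endomorphism and a multiplier. This is the bridge that lets a single application of $\tau$ collapse $\tau\phi\tau\phi z$ onto $\tau\phi z$.

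For the forward direction, let $\phi$ be a closure endomorphism and $\tau$ an idempotent endomorphism. The inequality $\tau\phi z \le \tau\phi\tau\phi z$ is immediate: $\phi$ is extensive, so $\tau\phi z \le \phi(\tau\phi z)$, and applying the isotonic $\tau$ together with $\tau\tau = \tau$ delivers the claim. For the reverse inequality, I would apply \eqref{cls} with $x := \tau\phi z$ and $y := z$ to obtain
\[
\phi(\tau\phi z) \to \phi z \;=\; \tau\phi z \to \phi z,
\]
and then apply the endomorphism $\tau$ to both sides. Idempotence of $\tau$ turns the right hand side into $\tau\phi z \to \tau\phi z = 1$, whence $\tau\phi\tau\phi z \le \tau\phi z$.

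For the converse, suppose that $\tau\phi$ is idempotent for every idempotent endomorphism $\tau$. Taking $\tau = \eps$ yields $\phi\phi = \phi$, so $\phi$ is idempotent. To extract extensivity I would test with the closure endomorphisms $\alpha_p$ listed at the start of this section (which are idempotent endomorphisms). Unwinding $\alpha_p \phi \alpha_p \phi y = \alpha_p \phi y$ with the aid of the already-proved $\phi\phi = \phi$ and the endomorphism identity $\phi(p \to \phi y) = \phi p \to \phi y$, then using exchange $a \to (b \to c) = b \to (a \to c)$, yields
\[
\phi p \to (p \to \phi y) \;=\; p \to \phi y \qquad \text{for all } p, y \in A.
\]
Specialising to $y := p$ and using exchange once more, the left hand side collapses to $p \to (\phi p \to \phi p) = 1$, so $p \to \phi p = 1$ and $p \le \phi p$. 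Combined with the idempotence already obtained, this shows that $\phi$ is a closure operator.

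The main obstacle is precisely this extensivity half of the converse: the case $\tau = \eps$ alone gives no information about whether $\phi x$ dominates $x$, so the right family of idempotent endomorphisms has to be chosen to test against. The specialisation $\tau = \alpha_p$ with the free variable afterwards set to $p$ itself appears to be essentially the unique cheap route that forces $p \le \phi p$ to fall out of the hypothesis.
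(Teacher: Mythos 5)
Your proof is correct and follows essentially the same route as the paper's: both inequalities in the forward direction come from the multiplier/closure identities (the paper uses \eqref{mlt} where you use the equivalent \eqref{cls}), and the converse is obtained exactly as in the paper by testing with $\tau = \eps$ for idempotence and $\tau = \alpha_p$ evaluated at $p$ for extensivity. The only cosmetic omission is that you do not explicitly note that isotonicity of $\phi$ (automatic for an endomorphism) is also needed to conclude that $\phi$ is a closure operator.
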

\begin{proof}
Assume that $\phi \in \CE$. If $\tau$ is an idempotent endomorphism, then, using (\ref{mlt}),
\( \tau\phi\tau\phi a \to \tau\phi a =
\tau\phi(\tau\phi a \to a) = \tau(\tau\phi a \to \phi a) = \tau\tau\phi a \to \tau\phi a =  1 \),
from where $(\tau\phi)^2 \le \tau\phi$. Similarly,
\( \tau\phi a \to \tau\phi\tau\phi a = \tau\phi(a \to \tau\phi a) = \tau(a \to \phi\tau\phi a) = \tau a \to \tau\phi\tau\phi a = 1\),
for
\[
a \le \phi a, \ \tau a \le \tau\phi a \le \phi\tau\phi a \ \mbox{ and } \ \tau a = \tau^2a \le \tau\phi\tau\phi a ;
\]
hence, $\tau\phi \le (\tau\phi)^2$. Thus $\tau\phi$ is idempotent. Now assume that the condition of the lemma is fulfilled. Putting $\tau = \eps$, we conclude that the endomorphism $\phi$ is idempotent. With $\tau = \alpha_a$, we then obtain:
\[
a \to \phi a = a \to \phi(a \to \phi a) = a \to (\phi a \to \phi\phi a) = a \to 1 = 1,
\]
from where $a \le \phi a$. As $\phi$ is isotonic, it follows that it is a closure operator.
\end{proof}

\begin{corollary}   \label{endos}
If two Hilbert algebras have isomorphic endomorphism mono\-ids, then they have  isomorphic submonoids of closure endomorphisms.
\end{corollary}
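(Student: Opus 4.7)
The plan is to read the preceding lemma as a purely monoid-theoretic description of closure endomorphisms inside the endomorphism monoid. Namely, an endomorphism $\phi$ of a Hilbert algebra $A$ is a closure endomorphism if and only if, for every idempotent $\tau$ in the endomorphism monoid $\mathrm{End}(A)$, the product $\tau\phi$ is again idempotent. Since this condition is formulated solely in terms of the monoid operation and the notion of idempotent (which is itself preserved by any monoid homomorphism), it must be invariant under monoid isomorphism.

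So, given a monoid isomorphism $\Phi\colon \mathrm{End}(A_1) \to \mathrm{End}(A_2)$, I would first note that $\Phi$ maps idempotents to idempotents (in both directions, since $\Phi^{-1}$ is also an isomorphism): $\tau^2 = \tau$ iff $\Phi(\tau)^2 = \Phi(\tau^2) = \Phi(\tau)$. Next, given $\phi \in \CE(A_1)$ and any idempotent $\sigma \in \mathrm{End}(A_2)$, write $\sigma = \Phi(\tau)$ for a unique idempotent $\tau \in \mathrm{End}(A_1)$; then $\tau\phi$ is idempotent by the lemma, so $\sigma\Phi(\phi) = \Phi(\tau\phi)$ is idempotent too. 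By the lemma applied in $A_2$, this gives $\Phi(\phi) \in \CE(A_2)$. The symmetric argument via $\Phi^{-1}$ shows $\Phi(\CE(A_1)) = \CE(A_2)$.

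Finally, since $\CE(A_i)$ is closed under composition (as already noted in the paper, indeed $\CE$ is a sublattice of the multiplier lattice under $\circ$) and contains the identity $\eps$ (which is preserved by $\Phi$ as the monoid unit), the restriction of $\Phi$ to $\CE(A_1)$ is a monoid isomorphism onto $\CE(A_2)$.

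There is no real obstacle here once the preceding lemma is in hand: the entire content of the corollary is that the property "closure endomorphism" has been translated into a first-order property of the monoid of endomorphisms, and such properties transfer across any monoid isomorphism. The only thing to check with any care is that the characterization is two-sided, i.e., that $\Phi^{-1}$ inherits the same behaviour, which is automatic.
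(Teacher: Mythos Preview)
Your proof is correct and is exactly the argument the paper intends: the corollary is stated without proof immediately after the lemma, because the lemma recasts ``being a closure endomorphism'' as a condition on idempotents in the endomorphism monoid, which is automatically preserved by any monoid isomorphism. You have simply made this implicit step explicit.
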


We can say more about closure endomorphisms of implication algebras. The items (b), (c), (e) in the theorem below were proved in \cite{Tsir} for commutative implicative semilattices; see Theorem 11 therein.

\begin{theorem} \label{impla1}
If $A$ is an implication algebra, then
\begin{myenum}
\item every multiplier of $A$ is isotonic, i.e., $\CE = M$,
\item for every $\phi \in \CE$, $\phi(x \vee y) = \phi x \vee \phi y = x \vee \phi y = \phi x \vee y$,
\item join in $CE$ is defined pointwise,
\item for every $\phi \in \CE$, its fixpoint set $F_\phi$ is a filter,
\item $\CE^\alpha$ is a hereditary subset of $\CE$: if $\phi \in \CE$ and $\phi \le \alpha_p$, then $\phi = \alpha_{\phi p \to p}$.
\end{myenum}
\end{theorem}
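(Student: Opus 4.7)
The core identity underpinning (a)--(d) is
\[
\phi(\psi x) = \phi x \vee \psi x
\]
for any two multipliers $\phi,\psi$ on the implication algebra $A$; it follows by combining Proposition~\ref{multichar}(h)--(i) with the defining identity $u \to v. \to v = u \vee v$. Specialising $\psi = \alpha_{x \to y}$, so that $\psi y = x \vee y$, and using $y \le \phi y$, this yields
\[
\phi(x \vee y) = \phi y \vee (x \vee y) = x \vee \phi y,
\]
and, by symmetry of join, also $y \vee \phi x$.

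From this key identity, (a)--(d) drop out quickly. For (a): $a \le b$ gives $\phi b = \phi(a \vee b) = b \vee \phi a \ge \phi a$, so $\phi$ is isotonic and hence a closure endomorphism by Proposition~\ref{isotmult1}(a). For (b), the sandwich $\phi(x \vee y) = x \vee \phi y \le \phi x \vee \phi y \le \phi(x \vee y)$ (the second inequality using isotonicity) completes the equation. For (c), pointwise $\vee$ now coincides with function composition, which is already the lattice join in $\CE$. For (d), $F_\phi$ is known to be a relative subsemilattice; if $a \in F_\phi$ and $a \le b$, the calculation $\phi b = \phi(a \vee b) = b \vee \phi a = b$ shows it is upward closed, hence a filter by the characterisation recalled in Section~\ref{prelim}.

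Part (e) is the delicate one. Set $q := \phi p \to p$; the aim is to verify $\phi x = q \to x$ pointwise. One direction is quick: the multiplier law gives $\phi q = \phi p \to \phi p = 1$, and then (b) delivers $\phi(q \vee x) = \phi q \vee x = 1 = q \vee \phi x$, so $q \vee \phi x = 1$. In an implication algebra this amounts to $q \to \phi x = \phi x$, and therefore $\phi x = q \to \phi x \ge q \to x$.

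The main obstacle is the reverse direction $\phi x \le q \to x$. By exchange of antecedents this is equivalent to $q \le \phi x \to x$; and since $\phi(\phi x \to x) = 1$ always, it will suffice to establish the inclusion $K_\phi \subseteq [q)$. So let $\phi y = 1$. First, the hypothesis $\phi \le \alpha_p$, which is the identity $p \to \phi u = p \to u$, yields at $u = y$ the inequality $p \le y$; the paragraph above (with $\phi p$ in place of $q$) then produces $y \vee \phi p = 1$, equivalently $\phi p \to y = y$. Setting $X := q \to y = (\phi p \to p) \to y$, this substitution and exchange of antecedents give
\[
X = (\phi p \to p) \to (\phi p \to y) = \phi p \to ((\phi p \to p) \to y) = \phi p \to X,
\]
so $\phi p \vee X = 1$. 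However, $y \ge p$ and isotonicity of $\to$ in its second argument force $X \ge (\phi p \to p) \to p = \phi p \vee p = \phi p$, whence $X \vee \phi p = X$; combining, $X = 1$, i.e.\ $q \le y$, as required.
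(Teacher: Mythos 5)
Your proposal is correct and takes essentially the same route as the paper: parts (a)--(d) rest on Proposition~\ref{multichar}(h),(i) combined with the implication-algebra identity $(u \to v) \to v = u \vee v$ and the multiplier laws, which is exactly the paper's toolkit. In part (e) you establish the same two inequalities $q \to x \le \phi x$ and $\phi x \le q \to x$ for $q = \phi p \to p$; your repackaging of the harder direction as the inclusion $K_\phi \subseteq [q)$ is only a cosmetic reorganization of the paper's direct computation, which likewise hinges on $p \le \phi a \to a$ and the identity $\phi(p \vee u) = \phi p \vee u$ from part (b).
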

\begin{proof} We shall apply Proposition \ref{multichar}.

\pritem{a}
Assume that $\phi$ is a multiplier.
If $a \le b$, then $a \le \phi b$, i.e., $a \vee \phi b = \phi b$ and, further, $\phi a \vee \phi b = \phi b \to \phi a. \to \phi a \le \phi b \to a. \to \phi a = \phi(\phi b \to a. \to a) = \phi(a \vee \phi b) = \phi b$. Thus, $\phi$ is isotonic.

\pritem{b}
At first, $\phi(a \vee b) = \phi(a \to b. \to b) = \phi a \to \phi b. \to \phi b = \phi a \vee \phi b$. Also, $\phi a \to \phi b. \to \phi b = a \to \phi b. \to \phi b = a \vee \phi b$, and likewise $\phi(b \vee a) = \phi a \vee b$.

\pritem{c}
By Proposition \ref{multichar}(b), $\psi\phi x = \phi x \vee \psi x$ (in $A$).

\pritem{d}
We already have noticed that, owing to Equation \eqref{multi}, the subset $F_\phi$ is a relative subsemilattice of $A$. It is also upwards closed: if $x \in F_\phi$ and $x \le y$, then $\phi y = \phi(x \vee y) = \phi x \vee y = x \vee y = y$.

\pritem{e}
Suppose that $\phi \in \CE$ and $\phi \le \alpha_p$. Notice that $\phi a \le p \to a$ and, hence, $p \le \phi a \to a$ for every $a$. Then $1 = \phi(\phi a \to a) = \phi(p \vee .\phi a \to a) = \phi p \vee (\phi a \to a) = \phi p \to (\phi a \to a). \to (\phi a \to a) \le \phi p \to p. \to .\phi a \to a = \phi a \to \di\phi p \to p. \to a$, from where $\phi a \le \phi p \to p. \to a$. Conversely, $\phi p \to p. \to a\di \to \phi a = \phi(\phi p \to p. \to a: \to a) = \phi p \to \phi p. \to \phi a\di \to \phi a = 1$ and $\phi p \to p. \to a \le \phi a$. Thus, $\phi a = \phi p \to p. \to a$ (for every $a$), and $\phi = \alpha_{\phi p \to p}$, as required.
\end{proof}

We conclude from (a) that, in an implication algebra, $\CE$ is a Boolean lattice, where the complement $-\phi$ of a closure endomorphism $\phi$ is given by $-\phi x = \phi x \to x$; see the final paragraph of Section \ref{prelim}. This observation allows us to improve (d) observing that  $F_\phi = K_{-\phi}$: for any $a$, $a \in F_\phi$ iff $\phi a \le a$ iff $\phi a \to a = 1$ iff $-\phi a = 1$ iff $a \in K_{-\phi}$. Of course, $\CE$ is also closed under the pointwise defined implication and is even a bounded implication algebra.

In particular, every multiplier $\delta_p$ is a closure endomorphism of any implication algebra $A$, and $\delta_p x = (-\alpha_p) x = p \vee x$. Let $\CE^\delta := \{\delta_p\colon p \in A\}$. The item (e) of the theorem immediately implies that $\CE^\delta$ is an upwards closed subset of $CE$: if $\delta_p \le \phi$, then $-\phi \le \alpha_p$ and $-\phi = \alpha_{(-\phi) p}$, i.e., $\phi = \delta_{(-\phi) p}$.

\section{Two characteristics of the lattice $CE$}

The two theorems of this section are analogues of  Theorems 2 and 3 respectively  in the abstract \cite{cle} stated for implicative semilattices; the results announced there were proved in \cite{Tsir} leaning upon the presence of the total meet operation. A part of the subsequent theorem (its first and third statements) was announced for Hilbert algebras in \cite{multimpl}; see Theorem 4 therein and the discussion following it.

Let again $A$ be an arbitrary Hilbert algebra.

\begin{theorem} \label{kk1}
The transformation  $\kk\colon \phi \mapsto K_\phi$ is an embedding of the bounded lattice $\CE$ into the lattice of filters of $A$. Its range $\kk(CE)$ consists precisely of monomial filters.
If $J = K_\phi$, then, for all $a \in A$, $\phi a = \max\, a/J$.
\end{theorem}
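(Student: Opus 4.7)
I will prove the three assertions together, starting with the formula $\phi a = \max a/J$ because it delivers injectivity and pins down the image simultaneously. For $\phi \in \CE$ with $J := K_\phi$: first, $\phi a$ lies in $a/J$ --- extensivity (Proposition \ref{multichar}(b)) gives $a \to \phi a = 1 \in J$, and the multiplier identity \eqref{mlt} applied to $\phi a \to a$ yields $\phi(\phi a \to a) = \phi a \to \phi a = 1$, so $\phi a \to a \in J$. Second, $\phi a$ dominates the class: if $b \in a/J$ then $b \to a \in J$, whence $\phi(b \to a) = b \to \phi a = 1$, i.e., $b \le \phi a$. Consequently each $K_\phi$ is monomial and $\phi$ is uniquely recovered from $K_\phi$; thus $\kk$ is injective with image contained in the monomial filters.

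The surjective direction is the heart of the proof. Given a monomial $J$, define $\phi a := \max a/J$; by Lemma \ref{cofin} this equals $\max J_a$ as well. I verify that $\phi$ is an isotonic multiplier, which by Proposition \ref{isotmult1}(a) makes it a closure endomorphism. Isotonicity is immediate: $a \le b$ implies $\phi a \to a \le \phi a \to b \in J$, so $\phi a \in J_b$ and $\phi a \le \phi b$. For the multiplier identity $\phi(x \to y) = x \to \phi y$ the key intermediate step is the (lc)-style representation $\phi a = (\phi a \to a) \to a$: one direction is the Hilbert identity $u \le (u \to v) \to v$; the other holds because $((\phi a \to a) \to a) \to a \ge \phi a \to a \in J$ puts $(\phi a \to a) \to a$ in $J_a$. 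Then, with $p := \phi(x \to y) \to (x \to y) \in J$, the exchange identity $u \to (v \to w) = v \to (u \to w)$ (valid in every Hilbert algebra) rewrites $\phi(x \to y) = p \to (x \to y)$ as $x \to (p \to y)$; since $(p \to y) \to y \ge p \in J$ we have $p \to y \in J_y$, so $p \to y \le \phi y$ and hence $\phi(x \to y) \le x \to \phi y$. The reverse inequality is easy: from $\phi y \equiv y \pmod{\theta_J}$ we get $x \to \phi y \equiv x \to y$, so $x \to \phi y \le \max(x \to y)/J = \phi(x \to y)$. Finally $K_\phi = J$ because $\phi a = 1$ iff $a \in 1/J = J$.

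It remains to see that $\kk$ preserves the bounded lattice structure. The bounds are trivial ($K_\eps = \{1\}$, $K_\iota = A$). Meet is pointwise, so $K_{\phi \wedge \psi} = K_\phi \cap K_\psi$. For the join I show $K_{\phi \circ \psi} = K_\phi \sqcup K_\psi$. The inclusion $\supseteq$ uses extensivity of $\psi$ and upward closure of $K_\phi$ for $K_\phi \subseteq K_{\phi \circ \psi}$, and $\phi 1 = 1$ for $K_\psi \subseteq K_{\phi \circ \psi}$. For $\subseteq$, if $\phi \psi a = 1$ then $\psi a \in K_\phi$, and $\psi(\psi a \to a) = \psi a \to \psi a = 1$ places $\psi a \to a$ in $K_\psi$; closure of the filter $[K_\phi \cup K_\psi)$ under modus ponens then yields $a$. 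The one genuinely delicate point in the whole argument is the multiplier identity in the surjective direction, where monomiality is used essentially and both the exchange identity and the (lc) rewriting of $\phi a$ are indispensable.
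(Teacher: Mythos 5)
Your proof is correct, and its overall skeleton matches the paper's: establish monomiality of $K_\phi$ together with the formula $\phi a = \max a/K_\phi$, deduce injectivity, do the lattice-homomorphism computations, and prove surjectivity onto the monomial filters by defining $\phi a := \max a/J$. Where you genuinely diverge is the surjectivity step, which is the heart of the matter. The paper verifies directly that $a \mapsto \max a/J$ is a closure endomorphism: it checks extensivity and idempotence and then proves the endomorphism identity $\phi(a \to b) = \phi a \to \phi b$ by a chain of filter-membership computations (steps (e1)--(e6)). You instead verify that the map is an isotonic multiplier --- isotonicity plus $\phi(x\to y) = x \to \phi y$ --- and invoke Proposition \ref{isotmult1}(a); your argument rests on the representation $\phi a = (\phi a \to a) \to a$, the exchange identity, and the fact that $\theta_J$ is a congruence (for the inequality $x \to \phi y \le \phi(x \to y)$). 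This is shorter, because the multiplier identity is easier to extract from the definition of $\phi$ than the endomorphism identity, and the cited proposition then supplies the rest; the paper's route is more self-contained but computationally heavier. Two smaller points also differ in your favour: injectivity falls out of the recovery formula $\phi a = \max a/K_\phi$ rather than needing a separate argument, and in proving $K_{\phi\circ\psi} \subseteq K_\phi \sqcup K_\psi$ you apply the filter's closure under modus ponens to $\psi a$ and $\psi a \to a$ directly, instead of the paper's more involved combined term. All the Hilbert-algebra facts you use ($u \le (u\to v)\to v$, the exchange law, monotonicity of $\to$ in its second argument) are standard, so the argument is sound as written.
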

\begin{proof} It consists of several steps. Assume that $\phi,\psi \in \CE$.
\pritem{a}
$K_\phi$ is a filter, for $\phi$ is an endomorphism.
\pritem{b}
$\kk$ is a homomorphism:
    \sbitem{b1}
$K_\eps = \{1\}$ and $K_\iota = A$,
    \sbitem{b2}
$K_{\phi \wedge \psi} = K_\phi \cap K_\psi$, for $\phi a \wedge \psi a = 1$ iff $\phi a = 1 = \psi a$,
    \sbitem{b3}
$K_{\phi \circ \psi} = K_\phi \sqcup K_\psi$: evidently, both $K_\phi$ and $K_\psi$ are  subsets of $K_{\phi \circ \psi}$ by Proposition \ref{multichar}(b) and (\ref{iso}), and then $K_\phi \sqcup K_\psi \subseteq K_{\phi \circ \psi}$. Further, choose any $a \in K_{\phi \circ \psi}$. So, $\psi\phi a = 1$ and, by Proposition \ref{multichar}(g,c), $\phi a \to a. \to \di \psi a \to a. \to a = 1$. As $\phi a \to a \in K_\phi$ and $\psi a \to a \in K_\psi$, these elements both belong to $K_\phi \sqcup K_\psi$, and we conclude by the definition of filter that $a \in K_\phi \sqcup K_\psi$. So $K_{\phi \circ \psi} \subseteq K_\phi \sqcup K_\psi$.

\pritem{c}
$\kk$ is injective: if $K_\phi = K_\psi$, then $\phi a = 1$ iff $\psi a = 1$ for every $a$; by (\ref{mlt}), this implies that, for all $a$, $\psi(\phi a \to a) = 1$ and, further, $\phi a \le \psi a$; likewise $\psi a \le \phi a$. Hence, $\phi = \psi$.

\pritem{d}
$K_\phi$ is monomial: for every $a$, $\phi a$ is the maximal element in $a/K_\phi$. Indeed, $\phi a \in a/K_\phi$, as $\phi(\phi a \to a) = 1 = \phi(a \to \phi a)$. Also, if $x \in a/K_\phi$, then $\phi(x \to a) = 1$ and $\phi a \ge \phi x \ge x$.

\pritem{e}
if $J$ is a monomial filter, then $J \in \kk(CE)$: the mapping $\phi\colon A \to A$ defined by
\begin{equation}    \label{phi/F}
\phi a := \max(a/J) = \max J_a
\end{equation}
(see Lemma \ref{cofin}) is a closure endomorphism and $J = K_\phi$. At first,
\sbitem{e1}
$a \le \phi a$, $\phi\phi a  = \phi a$.  \\
Also, by the definition of $\phi$,
\[
\phi a \to a \in J, \ \text{ and \ if } b \to a \in J, \text{ then } b \le \phi a .
\]
In particular, $a \to \phi b. \to .a \to b = a \to .\phi b \to b \in J$ (for $J$ is $\alpha$-closed), i.e., $a \to \phi b \in J_{a \to b}$, and then $a \to \phi b \le \phi(a \to b)$. From (e1), $\phi a \to \phi b \le a \to \phi b$; therefore,
\sbitem{e2}
$\phi a \to \phi b \le \phi(a \to b)$. \\
To prove the reverse inequality, we first note that
\sbitem{e3} $\phi(a \to b) \to .\phi a \to a \in J$, \\
as $J$ is $\alpha$-closed. Further,
\sbitem{e4} $(\phi(a \to b) \to .\phi a \to a) \to (\phi(a \to b) \to .\phi a \to b) \\
\hspace*{2em} = \phi(a \to b) \to .\phi a \to (a \to b) = \phi a \to (\phi(a \to b) \to (a \to b)) \in J$, \\
since $\phi(a \to b) \to (a \to b) \in J$. As $J$ is a filter, from (e3) and (e4),
\sbitem{e5} $\phi(a \to b) \to .\phi a \to b \in J$.

Now let $u := \phi(a \to b) \to .\phi a \to b\di \to b$. Then (e5) implies that $u \in J_b$, i.e., $u \le \phi b$, from where $\phi \to u \le \phi a \to \phi b$. On the other hand, $\phi a \to u = \phi(a \to b) \to .\phi a \to b\di \to .\phi a \to b$; so, $\phi(a \to b) \le \phi a \to u$. Thus,
\sbitem{e6}
$\phi(a \to b) \le \phi a \to \phi b$. \\
Eventually, (e1), (e2) and (e7) show that $\phi$ is a closure endomorphism, indeed. In addition, $a \in K_\phi$ iff $\phi a = 1$ iff $1 \in J_a$ iff $1 \to a \in J$ iff $a \in J$.

\pritem{f} The last assertion of the theorem follows from (d), (e) and (c).
\end{proof}

By Theorem \ref{isotmult2}, fixpoint sets of closure endomorphisms are special subalgebras of $A$. It was proved in \cite{Tsir} that the special subalgebras of an implicative semilattice form a lattice (which is a sublattice of the lattice of all subalgebras) and that the lattice of closure endomorphisms is dually embedded in it. We have been able to extend this result to Hilbert algebras only with a generalized version of the property of being special and with (Hilbert) subalgebras closed under compatible meets. As a full exposition of these matters would take us aside from the main topic of the paper, we present below a reduced version of the corresponding theorem. It nevertheless improves some results announced without proof in \cite{multimpl}, see Theorem 6 and the subsequent discussion therein.

Notice that, in a Hilbert algebra $A$, every special subset is $\alpha$-closed and is therefore a subalgebra of $A$. Indeed, let $S$ be a special subset of $A$, and choose $b \in S$ and $a \in A$. Then, for some $p$, $\alpha_p(a \to b)$ in $S$ and $\alpha_pb = b$. Hence, $a \to b = a \to \alpha_pb = \alpha_p(a \to b) \in S$.

We shall also need a counterpart of Lemma \ref{cofin}. Let $S$ be any subset of $A$, and let $S^a$ stand for the subset $\{x \to a \in S\colon x \in A\}$. Evidently, $S^a \subseteq [a)_S :=\{x \in S\colon a \le x\}$.

\begin{lemma} \label{coinit}
Suppose that the subset $S$ is special. If one of the minima $\min (a]_S$ and $\min S^a$ exists, then the other exists and both are equal.
\end{lemma}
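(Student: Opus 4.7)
The plan is to show that $S^a$ is coinitial in $[a)_S$, i.e., that every element of $[a)_S$ dominates some element of $S^a$. Together with the inclusion $S^a \subseteq [a)_S$ noted just before the lemma, this will force any minimum of one set to be a minimum of the other, so the two minima must coincide whenever either exists.

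For the coinitiality claim, I would fix $y \in [a)_S$, so that $y \in S$ and $a \le y$, and apply the defining property \eqref{spec} of a special subset with $b := y$ and the given $a$. This produces some $p \in A$ such that $\alpha_p a = p \to a$ lies in $S$ and $\alpha_p y = p \to y = y$. Then $p \to a$ belongs to $S^a$ by construction, and since $\alpha_p$ is an isotonic multiplier (Theorem \ref{isotmult2}) and $a \le y$, I obtain $p \to a \le p \to y = y$. Hence $p \to a$ is the desired element of $S^a$ lying below $y$.

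Once coinitiality is in hand, the equivalence of the two minima is essentially formal. If $m := \min S^a$ exists, then $m \in [a)_S$, and for any $y \in [a)_S$ coinitiality furnishes $y' \in S^a$ with $m \le y' \le y$, so $m = \min [a)_S$. Conversely, if $m := \min [a)_S$ exists, coinitiality gives $m' \in S^a$ with $m' \le m$; but $m' \in [a)_S$ forces $m \le m'$, whence $m = m' \in S^a$ is the minimum of $S^a$ as well.

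There is no serious obstacle here: the argument simply dualises the cofinality step in Lemma \ref{cofin}, replacing filters and cofinality by special subsets and coinitiality. The only point requiring attention is that in invoking specialness one must feed the element $y \in [a)_S$ itself as the required member $b \in S$, so that the resulting witness $p \to a$ lands inside $S^a$ and below $y$ simultaneously.
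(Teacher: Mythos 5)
Your proof is correct and follows the same route as the paper: establish that $S^a$ is coinitial in $[a)_S$ by applying the specialness condition \eqref{spec} with $b:=y$ to produce the witness $p\to a\in S^a$ lying below $y$, and then conclude formally that the two minima coincide. The paper leaves the final formal step implicit (as in Lemma \ref{cofin}), whereas you spell it out; otherwise the arguments are identical.
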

\begin{proof}
For $S$ special, $S^a$ is a coinitial subset of $[a)_S$: if $y \in [a)_S$, then there is $y' \in S^a$ such that $y' \le y$. Indeed, suppose that $y \in S$ and $a \le y$. By the supposition, there is an element $p$ such that $p \to a \in S$ and $p \to y = y$. Put $y':= p \to a$, then $y'\in S^a$ and $y'\le y$ as $p \to a \le p \to y$.
\end{proof}

Recall that a poset $R$ is a \emph{closure retract} if every its subset $[a)_R$ has the least element. The range of any closure operator is a closure retract, and conversely (see, e.g., \cite{binpairs}).

\begin{theorem} \label{ff1}
The transformation $\ff\colon \phi \mapsto F_\phi$ is an order embedding of the bounded poset $CE$ into the dual of the poset of special subalgebras of $A$. Its range $\ff(CE)$  consists exactly of special closure retracts, which form a lattice with meet $\cap$ and join $\nabla$, where $S\, \nabla \,T = \{x \cwedge y\colon x \in S, y \in T\}$. If $R = F_\phi$, then, for all $a$, $\phi a = \min (a]_R$.
\end{theorem}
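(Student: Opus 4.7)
The strategy is to mirror Theorem \ref{kk1} with $\ff$ in place of $\kk$, carried out in four short stages.

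Stage 1 (order-reversing embedding). I would first show that, for $\phi,\psi \in \CE$, the pointwise inequality $\phi \le \psi$ is equivalent to $F_\psi \subseteq F_\phi$. Forward: if $x \in F_\psi$, then by Proposition \ref{multichar}(b) and the pointwise hypothesis, $x \le \phi x \le \psi x = x$, so $\phi x = x$. Converse: every $\psi a$ lies in $F_\psi \subseteq F_\phi$, so $\phi(\psi a) = \psi a$, and isotonicity combined with $a \le \psi a$ yields $\phi a \le \phi(\psi a) = \psi a$. This simultaneously delivers injectivity of $\ff$ and the order-reversing property, i.e.\ that $\ff$ is an order-embedding into the dual of the poset of special subalgebras.

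Stage 2 (characterising the range). Every $F_\phi$ is a special subalgebra by Theorem \ref{isotmult2}. It is a closure retract because $\phi a \in F_\phi$, $a \le \phi a$, and for any $x \in F_\phi$ with $a \le x$ isotonicity gives $\phi a \le \phi x = x$; thus $\phi a = \min [a)_{F_\phi}$, which also settles the final formula of the theorem. Conversely, given a special closure retract $R$, I would define $\phi a := \min [a)_R$. By Lemma \ref{coinit}, $\phi a = \min R^a$ and therefore equals $p \to a$ for some $p \in A$, which is condition \eqref{lc}. For \eqref{leto}, suppose $x \le y \to z$; since $R$ is $\alpha$-closed, $\phi y \to \phi z \in R$, and the chain $x \le y \to z \le \phi y \to \phi z$ places this element in $[x)_R$, so $\phi x \le \phi y \to \phi z$. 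Proposition \ref{isotmult1}(c) then yields $\phi \in \CE$, with $F_\phi = R$ immediate from the definition.

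Stage 3 (lattice operations). Since $\ff$ is an order-reversing bijection of lattices onto its range, meet in $\ff(\CE)$ corresponds to join (composition) in $\CE$, and join in $\ff(\CE)$ corresponds to meet (pointwise) in $\CE$. For the meet I would verify $F_{\phi \circ \psi} = F_\phi \cap F_\psi$: if $x = \psi\phi x$, then $x \le \phi x \le \psi\phi x = x$ forces $\phi x = x$, and using $\phi\psi = \psi\phi$ from Proposition \ref{multichar}(h) also $\psi x = x$; the reverse inclusion is routine. For the join I would verify $F_{\phi \wedge \psi} = F_\phi \nabla F_\psi$: any $x = (\phi \wedge \psi) x = \phi x \cwedge \psi x$ exhibits $x$ in $F_\phi \nabla F_\psi$ (values of multipliers at a common point are always compatible, by the final paragraph of Section \ref{prelim}); conversely, for $x = y \cwedge z$ with $y \in F_\phi$, $z \in F_\psi$, the inequalities $x \le y$ and $x \le z$ give $\phi x \le y$ and $\psi x \le z$, so $\phi x \cwedge \psi x \le x$, while $x \le \phi x,\psi x$ supplies the opposite inequality.

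The principal obstacle lies in Stage 3's join description, because compatible meets in Hilbert algebras need not exist in general and the pointwise meet in $\CE$ must be reconciled with the $\nabla$-join on subsets. This is precisely where the standing fact that $\phi x$ and $\psi x$ are always compatible, allowing $\wedge$ and $\cwedge$ to coincide on multiplier values, becomes indispensable.
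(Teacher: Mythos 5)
Your overall architecture coincides with the paper's: order-reversal and injectivity checked pointwise, the range characterized by building $\phi a := \min\,[a)_R = \min R^a$ from a special closure retract via Lemma \ref{coinit} and then invoking Proposition \ref{isotmult1}(c), and the lattice structure via $F_{\phi\circ\psi} = F_\phi \cap F_\psi$ and $F_{\phi\wedge\psi} = F_\phi \,\nabla\, F_\psi$. Stages 1 and 3 are sound (in (b3) you replace the paper's appeal to multiplicativity \eqref{multi} by a direct lower-bound argument, which is fine). But there is a genuine gap in Stage 2, at the only non-routine step of the whole theorem: the verification of \eqref{leto} for the map $\phi$ defined from $R$. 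You justify $x \le \phi y \to \phi z$ by the chain $x \le y \to z \le \phi y \to \phi z$, and the second link does not follow. From $z \le \phi z$ you get $y \to z \le y \to \phi z$, but since $y \le \phi y$ and $\to$ is antitone in its first argument, the comparison between $y \to \phi z$ and $\phi y \to \phi z$ goes the \emph{wrong} way: $\phi y \to \phi z \le y \to \phi z$. The inequality $y \to z \le \phi y \to \phi z$ is in fact equivalent to the identity \eqref{cls} for $\phi$, i.e.\ to what you are in the process of proving, so as written the step is circular.

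The repair requires a second application of the minimality defining $\phi$, which is exactly how the paper proceeds: from $x \le y \to z$ one first exchanges premises to get $y \le x \to z \le x \to \phi z$, notes that $x \to \phi z \in R$ because $R$ is $\alpha$-closed, and concludes $\phi y \le x \to \phi z$ by minimality of $\phi y$; exchanging back gives $x \le \phi y \to \phi z$, and only then does a further appeal to minimality (now of $\phi x$, using $\phi y \to \phi z \in R$) yield $\phi x \le \phi y \to \phi z$. (The paper runs the same two-step argument with the roles of the two variables interchanged.) So your target inequality is correct and your use of $\alpha$-closedness is the right ingredient, but the one-line chain must be replaced by this double use of the $\min$-property; without it the proof of \eqref{leto}, and hence of the range characterization, is incomplete.
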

\begin{proof} consists of several steps. Assume that $\phi,\psi \in CE$.

\pritem{a} Every subset $F_\phi$ is a closure retract and a subalgebra of $A$. By Theorem \ref{isotmult2}, it is also special.

\pritem{b} $\ff$ is order reversing: if $\phi \le \psi$, then, for $b \in F_\psi$, $\phi b \le \psi b \le b$, so that $b \in F_\phi$; thus, $F_\psi \subseteq F_\phi$. More fully,
    \sbitem{b1} $F_\eps = A$ and $F_\iota = \{1\}$,
    \sbitem{b2} $F_{\phi\circ\psi} = F_\phi \cap F_\psi$, for $\psi\phi x = x$ iff $\psi\phi x \le x$ iff $\phi x \le x$ and $\psi x \le x$ iff $\phi x = x =\psi x$.
    \sbitem{b3} $F_{\phi \wedge \psi} = F_\phi \nabla F_\psi$:  at first, if $z \in F_{\phi \wedge \psi}$, then $z = (\phi \wedge \psi)z = \phi z \wedge \psi z$ with $\phi z \in F_\phi$, $\psi z \in F_\psi$, i.e., $z \in F_\phi \nabla F_\psi$; at second, if $a \in F_\phi \nabla F_\psi$, then $a = a_1 \cwedge a_2$ with $a_1 \in F_\phi$, $a_2 \in F_\psi$ and $a_1, a_2 \in F_{\phi \wedge \psi}$, so that $(\phi \wedge \psi)a = (\phi \wedge \psi)(a_1 \cwedge a_2) =  (\phi \wedge \psi)a_1 \cwedge (\phi \wedge \psi)a_2 = a_1 \cwedge a_2 = a$, i.e., $a \in F_{\phi \wedge \psi}$.

\pritem{c} $\ff$ is injective: a closure operator is uniquely determined by its range. Then $F_\phi \subseteq F_\psi$ (i.e., $F_{\phi \circ \psi} = F_\phi$) implies that $\psi \le \phi$. Together with (b), this shows that $\ff$ dually transfers the order structure of $CE$ to $\ff(CE)$.

\pritem{d} If a closure retract $R$ is special, then $R \in \ff(CE)$: $R$ is the range of the closure operator $\phi$ defined by
\begin{equation}    \label{phi/R}
\phi a:= \min [a)_R = \min R^a
\end{equation}
(see Lemma \ref{coinit}), and it remains only to demonstrate that $\phi$ is an endomorphism. We actually shall prove more---that Proposition \ref{isotmult1}(c) applies. Indeed, suppose that $a \le b \to c$. Being special,  $R$ is $\alpha$-closed, so $a \le b \to \phi c \in R$. Hence, $\phi a \le b \to \phi c$ by Equation \eqref{phi/R}, from where $b \le \phi a \to \phi c \in R$ and again $\phi b \le \phi a \to \phi c$ by the definition of $\phi$. We thus have proved the condition \eqref{leto}. On the other hand, it follows from Equation \eqref{phi/R} that $\phi a = x \to a$ for an appropriate $p$. Therefore,  by Proposition \ref{isotmult1}, $\phi$ is a (closure) endomorphism.

\pritem{e} The last assertion of the theorem follows from (a), (d) and (c).
\end{proof}

\begin{corollary}
The monomial filters of a Hilbert algebra form a sublattice of the lattice of filters, which is order-dual to the lattice of special closure retracts. 
\end{corollary}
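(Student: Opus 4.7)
The plan is essentially bookkeeping on top of the two preceding theorems. By Theorem \ref{kk1} the map $\kk$ is a lattice embedding of $\CE$ into the full filter lattice of $A$ whose image is exactly the set of monomial filters. In that theorem the meet and join computations (steps (b2) and (b3) of its proof) show that $\kk$ sends the operations $\wedge$ and $\circ$ of $\CE$ to $\cap$ and $\sqcup$, i.e.\ to the operations inherited from the ambient filter lattice. Consequently the image $\kk(\CE)$ is closed under the filter-lattice operations, which is precisely the statement that the monomial filters form a sublattice of the lattice of filters of $A$.

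For the order-duality with special closure retracts, I would invoke Theorem \ref{ff1}: the map $\ff$ is an anti-isomorphism of $\CE$ onto the lattice of special closure retracts, with joins and meets there given by $\cap$ and $\nabla$. Composing the isomorphism $\kk^{-1}$ (from monomial filters onto $\CE$) with $\ff$ produces an order-reversing bijection $\ff \circ \kk^{-1}$ from the sublattice of monomial filters onto the lattice of special closure retracts, and this is exactly an anti-isomorphism of lattices. Explicitly, a monomial filter $J = K_\phi$ is sent to the special closure retract $F_\phi$, and the assignments $a \mapsto \max a/J$ and $a \mapsto \min\,[a)_{F_\phi}$ (noted at the ends of Theorems \ref{kk1} and \ref{ff1}) recover the same closure endomorphism $\phi$ that mediates between the two descriptions.

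There is no real obstacle here; the whole statement is a corollary in the strict sense, being a direct recombination of two already-established isomorphisms. The only point that deserves a sentence of its own in the write-up is making explicit that $\kk$ preserves not only the order but the specific lattice operations of the filter lattice, so that the word ``sublattice'' (rather than merely ``subposet that happens to be a lattice'') is justified.
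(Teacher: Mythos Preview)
Your proposal is correct and matches the paper's approach exactly: the paper states this corollary with no proof, treating it as immediate from Theorems~\ref{kk1} and~\ref{ff1}, and your write-up spells out precisely the composition $\ff\circ\kk^{-1}$ that the paper leaves implicit. Your remark that the sublattice claim needs the explicit preservation of $\cap$ and $\sqcup$ (steps (b2)--(b3) of Theorem~\ref{kk1}) is the one point worth making explicit, and you have done so.
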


It also follows from Theorems \ref{impla1}(a,d) and \ref{kk1} that, in an implication algebra, a monomial filter bounded from below is principal.

\begin{corollary}   \label{ff2}
If $A$ is an implication algebra, then 
\begin{myenum}
\item
the transformation $\ff$ is an embedding of the lattice $CE$ into the dual lattice of filters of $A$,
\item
$\kk(CE)$ and $\ff(CE)$ are mutually dual Boolean lattices, where $F_\phi$ and $K_\phi$ are the complements of each other for every $\phi \in CE$.
\end{myenum}
\end{corollary}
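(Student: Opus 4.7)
My plan is to exploit the Boolean structure of $\CE$ in the implication-algebra setting. Recall from Theorem \ref{impla1}(a) and the paragraph following its proof that $\CE = M$ is a Boolean lattice whose complement $-\phi$ is given by $(-\phi)x = \phi x \to x$, and that $F_\phi = K_{-\phi}$ for every $\phi \in \CE$.

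For (a), I would first note that, by Theorem \ref{impla1}(d), every $F_\phi$ is already a filter of $A$, so $\ff$ lands in the filter lattice. Theorem \ref{ff1} tells us that $\ff$ is injective and order-reversing, and that $F_{\phi \circ \psi} = F_\phi \cap F_\psi$, so $\ff$ already sends the $\CE$-join $\circ$ to the filter-lattice meet $\cap$. The one outstanding verification is that $F_\phi \nabla F_\psi$ coincides with the filter-lattice join $F_\phi \sqcup F_\psi$: the containment $F_\phi \sqcup F_\psi \subseteq F_\phi \nabla F_\psi$ is automatic because, by Theorem \ref{ff1}(b3), the right-hand side is already a filter containing both $F_\phi$ and $F_\psi$; conversely, any element $x \cwedge y$ of $F_\phi \nabla F_\psi$ has its components in $F_\phi \sqcup F_\psi$, and filters are closed under existing compatible meets (as recalled in Section \ref{prelim}). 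This promotes $\ff$ to a dual lattice embedding into the filter lattice.

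For (b), I would combine (a) with Theorem \ref{kk1}. Since $\kk$ is a lattice embedding of the Boolean lattice $\CE$ into the filter lattice, $\kk(\CE)$ inherits a Boolean structure isomorphic to $\CE$; by (a), $\ff(\CE)$ inherits a Boolean structure isomorphic to $\CE^{\mathrm{op}}$. The identity $F_\phi = K_{-\phi}$, together with the involutivity of $-$ on $\CE$, forces $\kk(\CE)$ and $\ff(\CE)$ to coincide as subsets of the filter lattice — both equal the set of monomial filters of $A$ — so the two Boolean structures carried on this common set are mutually dual. The complement of $K_\phi$ within $\kk(\CE)$ is then $\kk(-\phi) = K_{-\phi} = F_\phi$, giving the complementation clause.

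The one genuine subtlety I anticipate is the identification $F_\phi \nabla F_\psi = F_\phi \sqcup F_\psi$, which hinges on the behaviour of compatible meets relative to filter joins; everything else is a transcription of previously established structure theorems.
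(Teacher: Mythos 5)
Your proposal is correct and takes essentially the same route as the paper: part (a) reduces to the identification $F_\phi \nabla F_\psi = F_\phi \sqcup F_\psi$, proved exactly as you do via $F_\phi \nabla F_\psi = F_{\phi\wedge\psi}$ being a filter and filters being closed under compatible meets, and part (b) is read off from the Boolean structure of $\CE$ together with $F_\phi = K_{-\phi}$. Your extra observation that $\kk(\CE)$ and $\ff(\CE)$ coincide as subsets of the filter lattice is a harmless (and correct) elaboration of what the paper leaves implicit.
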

\begin{proof}
Assume that  $A$ be an implication algebra. 
\pritem{a}
By Theorem \ref{impla1}, each $F_\phi$ is a filter, and, owing to item (b) in the proof of Theorem \ref{ff1}, it remains to prove that $F_\phi \nabla F_\psi = F_\phi \sqcup F_\psi$ in the filter lattice of $A$. At first, $F_\phi \nabla F_\psi$ is indeed a filter, for it coincides with the filter $F_{\phi \wedge \psi}$. Next, due to this coincidence, $F_\phi, F_\psi \subseteq F_\phi \nabla F_\psi$. At last, if $F_\phi, F_\psi \subseteq J$ for some filter $J$ and $a \in F_\phi \nabla F_\psi$, then $a = a_1 \cwedge a_2$ for some $a_1 \in F_\phi, a_2 \in F_\psi$, so that $a_1,a_2 \in J$ and $a \in J$ (recall that a filter is a relative subsemilattice of $A$); hence, $F_\phi \nabla F_\psi \subseteq J$. Therefore, $F_\phi \nabla F_\psi$ is indeed the least upper bound of $F_\phi$ and $F_\psi$.
\pritem{b}
This follows from the fact that $CE$ is a Boolean lattice (see Theorem {impla1}(a)), the preceding corollary and the identity  $F_{\phi} = K_{-\phi}$ noticed after the proof of  Theorem {impla1}.
\end{proof}

\section{On finitely generated closure endomorphisms}   \label{finit}

Every closure endomorphism is a join of principal closure endomorphisms; more exactly,
 \begin{equation}   \label{jdense}
 \phi = \bigvee(\alpha_p\colon p \in K_\phi)
 \end{equation}
 (\cite[Proposition 2]{adj}).
A closure endomorphism was said in \cite{adj} to be \emph{finitely generated} if it is a join (i.e., composition) of a finite number of principal closure endomorphisms. Equivalently, $\phi$ is finitely generated if $K_\phi$ is a finitely generated filter. For a finite subset $P = \{p_1,p_2, \ldots, p_n\}$ of $A$, let $\alpha_P$ be the closure endomorphism $\alpha_{p_1} \circ \alpha_{p_2} \circ \cdots \circ \alpha_{p_n}$; for $P = \varnothing$, we set $\alpha_P = \eps$. Evidently, $\alpha_P = \alpha_Q$ iff $P$ and $Q$ generate the same filter; in particular, $\alpha_\varnothing = \alpha_1$.

The set $\CE^f$ of all finitely generated closure endomorphisms is an upper subsemilattice of $\CE$, named the \emph{adjoint semilattice} of $A$ in \cite{adj}. See Proposition 3 of that paper for the following observation, which implies by virtue of Theorem \ref{kk1} that every finitely generated filter is monomial.

\begin{proposition} \label{kk2}
The transformation $\kk$ is an isomorphism of $\CE^f$ onto the upper semilattice of finitely generated filters of $A$.
\end{proposition}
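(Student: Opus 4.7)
The plan is to leverage Theorem \ref{kk1}, which already asserts that $\kk$ is an injective bounded lattice homomorphism from $\CE$ into the filter lattice of $A$ with $\kk(\phi \circ \psi) = K_\phi \sqcup K_\psi$. Restricted to $\CE^f$, these properties immediately provide an injective upper semilattice homomorphism, so the essential content of the proposition is to identify the image $\kk(\CE^f)$ with the set of finitely generated filters of $A$.

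For the forward inclusion, recall from the discussion following Theorem \ref{isotmult2} that $K_{\alpha_p} = [p)$. Given $\phi = \alpha_{p_1} \circ \cdots \circ \alpha_{p_n} \in \CE^f$, a routine induction using Theorem \ref{kk1}(b3) yields
\[
K_\phi \,=\, [p_1) \sqcup \cdots \sqcup [p_n) \,=\, [\{p_1, \ldots, p_n\}),
\]
a finitely generated filter (with the empty case covered by $\alpha_\varnothing = \eps$ and $K_\eps = \{1\} = [\varnothing)$). Conversely, any finitely generated filter $J$ has the form $[P)$ for some finite $P$; then $\alpha_P \in \CE^f$ and its kernel is $J$ by the preceding computation, giving surjectivity onto the finitely generated filters. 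Since a composition of two finite products of principal closure endomorphisms is again such a product, $\CE^f$ is closed under $\circ$ and is therefore a genuine upper subsemilattice of $\CE$, so $\kk|_{\CE^f}$ is the claimed isomorphism.

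There is no substantive obstacle here: once Theorem \ref{kk1} and the identity $K_{\alpha_p} = [p)$ are in hand, the proof amounts to reading off the definition of $\CE^f$ and translating between finite compositions $\alpha_{p_1} \circ \cdots \circ \alpha_{p_n}$ and the filters they generate. The one point requiring mild care is that the notation $\alpha_P$ depends only on $[P)$ rather than on $P$ itself, but this is exactly the remark that $\alpha_P = \alpha_Q$ iff $[P) = [Q)$ recorded in the paragraph immediately preceding the proposition.
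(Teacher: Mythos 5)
Your proof is correct. Note, however, that the paper itself offers no proof of this proposition: it is quoted as Proposition~3 of the author's earlier paper \cite{adj} on adjoint semilattices, so there is no in-text argument to compare against. Your derivation is a legitimate self-contained alternative: you read the statement off Theorem~\ref{kk1} (injectivity of $\kk$ and the identity $K_{\phi\circ\psi}=K_\phi\sqcup K_\psi$ from step (b3) of its proof), combine it with $K_{\alpha_p}=[p)$ and the standard fact that $[p_1)\sqcup\cdots\sqcup[p_n)=[\{p_1,\ldots,p_n\})$, and handle the empty case via $\alpha_\varnothing=\eps$, $K_\eps=\{1\}$. All of these ingredients are explicitly available in the paper, and the induction is routine. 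The one remark worth adding is that your computation also supplies a proof of the equivalence asserted without proof in the paragraph defining $\CE^f$ (namely that $\phi$ is finitely generated iff $K_\phi$ is a finitely generated filter): the forward direction is your kernel computation, and the converse follows from it together with the injectivity of $\kk$, since $K_\phi=[P)=K_{\alpha_P}$ forces $\phi=\alpha_P$. What your approach buys is independence from \cite{adj} for this particular statement; what it costs is nothing, since every step is already established in the present paper.
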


Both semilattices are even subtractive (i.e., they are dual  implicative semilattices); see \cite[Theorem 4]{adj}, resp., \cite[Theorem 2.3]{part}. The subset of principal closure endomorphisms of $A$ is closed under subtraction and form an algebra anti-isomorphic to $A$ (by \cite[Theorem 7]{adj}, the transformation $p \mapsto \alpha_p$ is an order-reversing mapping of $A$ into $CE$, naturally, with the range $\CE^\alpha$, and $\alpha_{p \to q} = \alpha_q - \alpha_p$).
Therefore, any implicative semilattice of which $A$ is a reduct and which itself is anti-isomorphic to $\CE^f$ is a \emph{minimal Brouwerian extension} of $A$; see Section 5 of \cite{adj}.

\begin{remark}
We take the opportunity to correct a sad misprint on p.\ 49 in \cite{adj}: a Brouwerian extension $B$ of a Hilbert algebra $A$ is minimal, i.e., generated by $A$, if and only if every element of $B$ can be presented as a \textsl{meet} of a finite subset of $A$, not a join, as mistakenly said there on line 3 (the subsequent demonstration is correct). Actually, a minimal Brouwerian extension of $A$ is a particular implicative semilattice envelope of $A$ in the sense of \cite{free} and, hence, a free implicative semilattice extension of $A$ \cite[Section 6]{free}. See also \cite{Porta}.
\par
Also, in the proof of Theorem 12, line 4, the attribute `finitely generated' should be inserted before `closure endomorphisms'.
\end{remark}

Recall that an element $a$ of a lattice $L$ is said to be \emph{compact} if every set $X \subseteq L$ such that $a \le \bigvee X$ has a finite subset with the same property. (The lattice in the definition is commonly assumed to be complete; we take the liberty to apply the notion of compactness also to arbitrary lattices, as in \cite{B}). The lattice $L$ is said to be \emph{compactly generated} or algebraic, if every its element is a join of compact elements. In the filter lattice of $A$, the compact elements are precisely the finitely generated filters (see \cite[Theorem 2]{Ch} or, for more details, Section 1 in \cite{glimpse}). As were noticed above, all finitely generated filters of $A$ are monomial; therefore, they remain to be compact in the lattice of monomial filters of $A$.  The converse also holds: by a standard argument, any compact element $J$ of  the lattice of monomial filters is finitely generated. Indeed, $J$ is the join of those principal filters (all monomial) that are included in $J$; by compactness, we can choose a finite number $x_1, x_2, \ldots, x_n$ of elements of $J$ so that $J = [x_1) \sqcup [x_2) \sqcup \cdots \sqcup [x_n) = [x_1,x_2,\ldots,x_n)$. Theorem \ref{kk1}, Proposition \ref{kk2} and the equality (\ref{jdense}) now lead us to the following conclusion.

\begin{corollary}
The lattice $\CE$ is compactly generated, and $\CE^f$ is its set of compact elements.
\end{corollary}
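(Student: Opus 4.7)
The plan is to transfer the characterization of compact elements from the lattice of monomial filters to $\CE$ via the isomorphism $\kk$, and then invoke equation \eqref{jdense} to deduce that $\CE$ is compactly generated.

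By Theorem \ref{kk1} together with the corollary to Theorem \ref{ff1}, $\kk$ is a lattice isomorphism of $\CE$ onto the sublattice of monomial filters of $A$. Any order-isomorphism preserves and reflects every existing supremum and infimum; in particular, $\phi \in \CE$ is a compact element of $\CE$ if and only if the monomial filter $K_\phi$ is a compact element of the lattice of monomial filters. The discussion immediately preceding the corollary identifies the compact elements of the latter lattice with the finitely generated filters of $A$, which by Proposition \ref{kk2} constitute precisely the image $\kk(\CE^f)$. Hence the compact elements of $\CE$ are exactly the members of $\CE^f$.

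Finally, \eqref{jdense} exhibits every $\phi \in \CE$ as the join of the principal closure endomorphisms $\alpha_p$ with $p \in K_\phi$, each of which lies in $\CE^f$ and is therefore compact by the previous paragraph. Thus every element of $\CE$ is a join of compact elements, i.e., $\CE$ is compactly generated. The one subtle point is that $\CE$ need not be complete, so one must use the weaker notion of compactness adopted in the paper following \cite{B}; this presents no real obstacle because $\kk$ preserves and reflects all suprema that actually exist on either side, which is exactly what that notion of compactness demands.
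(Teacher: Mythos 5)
Your proof is correct and follows essentially the same route as the paper: it identifies the compact elements of the lattice of monomial filters with the finitely generated filters (the content of the discussion preceding the corollary), transfers this along the isomorphism $\kk$ using Theorem \ref{kk1} and Proposition \ref{kk2}, and concludes compact generation from \eqref{jdense}. Your additional remarks---that an order isomorphism reflects all existing suprema and that this is what the paper's weakened notion of compactness requires in the absence of completeness---merely make explicit what the paper leaves implicit.
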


We now move to the question which Hilbert algebras have isomorphic adjoint semilattices.

\begin{theorem} \label{isofilters}
Two Hilbert algebras have isomorphic adjoint semilattices if and only if their filter lattices are isomorphic.
\end{theorem}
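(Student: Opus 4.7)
The plan is to reduce the claim to a statement about the semilattices of finitely generated filters and then invoke the standard correspondence between algebraic lattices and their compact-element semilattices. By Proposition~\ref{kk2}, the transformation $\kk$ gives a semilattice isomorphism between $\CE^f$ and the join semilattice $\mathcal{F}^f$ of finitely generated filters (under $\sqcup$). Consequently, the theorem is equivalent to showing that two Hilbert algebras $A$ and $B$ have isomorphic semilattices $\mathcal{F}^f(A)$, $\mathcal{F}^f(B)$ if and only if their filter lattices $\mathcal{F}(A)$, $\mathcal{F}(B)$ are isomorphic.

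For the ``only if'' direction I would use the fact recalled just before the theorem: the filter lattice of a Hilbert algebra is algebraic, and its compact elements are precisely the finitely generated filters. Any lattice isomorphism must therefore send compact elements to compact elements and restrict to a semilattice isomorphism $\mathcal{F}^f(A) \to \mathcal{F}^f(B)$; composing with $\kk$ and $\kk^{-1}$ yields the desired isomorphism of adjoint semilattices.

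For the converse I would invoke the classical representation theorem for algebraic lattices: every algebraic lattice $L$ is isomorphic to the ideal lattice $\mathrm{Id}(K(L))$ of its join semilattice $K(L)$ of compact elements, via the map sending each $x \in L$ to the set of compact elements below it. Applied to the filter lattices of $A$ and $B$, this shows that any semilattice isomorphism $\mathcal{F}^f(A) \to \mathcal{F}^f(B)$ extends functorially (by taking ideal lattices) to a lattice isomorphism $\mathrm{Id}(\mathcal{F}^f(A)) \to \mathrm{Id}(\mathcal{F}^f(B))$, that is, between $\mathcal{F}(A)$ and $\mathcal{F}(B)$.

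The only real subtlety is matching conventions: the representation theorem is typically stated for join semilattices with a least element, which here is the filter $\{1\} = [\varnothing)$, so nothing extra need be verified. Everything else is bookkeeping through $\kk$ and the already-established identification of the compact filters with the finitely generated ones.
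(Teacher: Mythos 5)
Your proof is correct and follows essentially the same route as the paper: the direction from a filter-lattice isomorphism to an isomorphism of adjoint semilattices is verbatim the paper's argument (an isomorphism of algebraic lattices preserves compact elements, the compact filters are exactly the finitely generated ones, and these correspond to $\CE^f$ via $\kk$ by Proposition~\ref{kk2}). For the converse the paper simply cites Theorem~6 of \cite{adj} (the filter lattice is isomorphic to the ideal lattice of the adjoint semilattice), whereas you re-derive that fact from the classical representation $L \cong \mathrm{Id}(K(L))$ for algebraic lattices together with Proposition~\ref{kk2}; the two arguments are interchangeable, yours being self-contained at the cost of invoking the general representation theorem (and note only that your labels ``if'' and ``only if'' are swapped relative to the statement, which is harmless since you prove both directions).
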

\begin{proof}
Suppose that $f$ is an isomorphism between filter lattices of Hilbert algebras $A$ and $B$. Then $f$ yields a bijective connection between the sets of compact elements of both lattices and, consequently, establishes an isomorphism between their semilattices of finitely generated filters, so that the adjoint semilattices of $A$ and $B$ are indeed isomorphic.

The converse follows from Theorem 6 in \cite{adj}, which states that the filter lattice of a Hilbert algebra is isomorphic to the lattice of ideals of its adjoint semilattice.
\end{proof}

\begin{theorem}     \label{adjendos}
Two Hilbert algebras have isomorphic adjoint semilattices if they have isomorphic endomorphism monoids.
\end{theorem}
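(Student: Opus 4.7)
The plan is to chain Corollary \ref{endos} with the corollary just above, which identifies $\CE^f$ with the set of compact elements of $\CE$. Suppose the endomorphism monoids of $A$ and $B$ are isomorphic. By Corollary \ref{endos}, this isomorphism restricts to a monoid isomorphism $\Phi \colon (\CE(A),\circ) \to (\CE(B),\circ)$ between the submonoids of closure endomorphisms.

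Next I would argue that $\Phi$ is in fact a join-semilattice isomorphism of the lattices $\CE(A)$ and $\CE(B)$. Recall that the join in $\CE$ is given precisely by composition, as noted in the introduction and reiterated at the start of Section \ref{isots}; consequently, preserving $\circ$ is the same as preserving $\vee$. Since the order on any join-semilattice is recoverable from the join via $\phi \le \psi$ iff $\phi \circ \psi = \psi$, $\Phi$ also preserves the order on $\CE$.

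Compactness is defined purely in terms of joins and the resulting order, so $\Phi$ sends compact elements of $\CE(A)$ bijectively to compact elements of $\CE(B)$, and also respects finite joins of compact elements. By the preceding corollary, the compact elements of $\CE(A)$ and $\CE(B)$ are precisely the members of $\CE^f(A)$ and $\CE^f(B)$. Therefore $\Phi$ restricts to a bijection between the adjoint semilattices that preserves composition, which is the required semilattice isomorphism.

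There is no real obstacle here. The only point requiring care is to check that the monoid operation on endomorphisms whose isomorphism we are assuming is the same composition that defines the join on $\CE$, and that the notion of compactness used above is applicable without completeness of $\CE$. Both are already in place: Corollary \ref{endos} uses ordinary composition of endomorphisms, and the paper explicitly adopts the compactness notion for arbitrary (not necessarily complete) lattices. After these verifications the theorem reduces to a routine transfer of compactness along a semilattice isomorphism.
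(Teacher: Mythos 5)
Your proof is correct and follows essentially the same route as the paper: both start from Corollary \ref{endos} and then transfer the compact elements (identified with $\CE^f$) along the induced lattice isomorphism. The only cosmetic difference is that you argue directly inside $\CE$ using the corollary that $\CE^f$ is its set of compact elements, whereas the paper detours through the isomorphic lattice of monomial filters and the argument of Theorem \ref{isofilters}; the substance is the same.
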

\begin{proof}
It immediately follows from Corollary \ref{endos} that Hilbert algebras with isomorphic monoids of endomorphisms have also isomorphic lattices of closure endomorphisms and, further, lattices of monomial filters (Theorem \ref{kk1}). Just as in the proof of Theorem \ref{isofilters}, this implies isomorphism of the respective adjoint semilattices.
\end{proof}

If a Hilbert algebra is actually an implicative semilattice (this is the case if and only if all its finitely generated filters are principal), then it is dually isomorphic to its adjoint semilattice (\cite[Corollary 9]{adj}). We thus come to a corollary that essentially improves the main result (Theorem 3.3) of \cite{Tsi} stated, in the terminology of the present paper, for implicative semilattices having only principal closure endomorphisms or, equivalently, only principal monomial filters.

\begin{corollary}
Two implicative semilattices are isomorphic if and only if their endomorphism monoids are isomorphic.
\end{corollary}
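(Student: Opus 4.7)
The plan is to chain together Theorem \ref{adjendos} with the fact, quoted immediately before the corollary, that an implicative semilattice is dually isomorphic to its adjoint semilattice (\cite[Corollary 9]{adj}). The ``only if'' direction is trivial: any isomorphism $f\colon A \to B$ of implicative semilattices induces, by conjugation $\phi \mapsto f \circ \phi \circ f^{-1}$, an isomorphism of the endomorphism monoids, so nothing there requires proof.

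For the ``if'' direction, suppose the endomorphism monoids of implicative semilattices $A$ and $B$ are isomorphic. I would first invoke Theorem \ref{adjendos} to produce an isomorphism between their adjoint semilattices $\CE^f(A)$ and $\CE^f(B)$. Then, since $A$ and $B$ are themselves implicative semilattices, the cited Corollary 9 of \cite{adj} supplies dual isomorphisms between each of $A$, $B$ and its adjoint semilattice. Composing the dual isomorphism $A \to \CE^f(A)^{\mathrm{op}}$, the dualization of the adjoint-semilattice isomorphism, and the dual isomorphism $\CE^f(B)^{\mathrm{op}} \to B$, yields an order isomorphism $A \to B$. Because the implicative-semilattice operations are uniquely determined by the underlying order (meet is the infimum of two elements, and implication is the relative pseudocomplement), this order isomorphism is automatically an isomorphism of implicative semilattices.

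The argument is essentially formal: the mathematical content sits in Theorem \ref{adjendos} and in \cite[Corollary 9]{adj}. The only point that deserves a moment's care is checking that the two dualizations cancel correctly, so that the composite runs from $A$ to $B$ rather than to $B^{\mathrm{op}}$; this is automatic because duality is applied an even number of times. Consequently there is no genuine obstacle, and the proof reduces to bookkeeping.
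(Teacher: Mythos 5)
Your proposal is correct and follows exactly the route the paper intends: the trivial ``only if'' direction, and for ``if'' the chain Theorem \ref{adjendos} $\Rightarrow$ isomorphic adjoint semilattices, composed with the dual isomorphism of each implicative semilattice with its adjoint semilattice from \cite[Corollary 9]{adj}. The paper leaves this composition implicit in the paragraph preceding the corollary, so your write-up is just a slightly more explicit version of the same argument.
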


Theorem 1 of \cite{Gait1}, which is proved using topological (duality theory) methods, states a similar result for implication algebras. As every implication algebra is an implicative semilattice, the theorem is a particular case of the above result.

We end with some more results on implication algebras.  At first, it immediately follows from (\ref{jdense}) that, for every $\phi \in CE$,
\[
\phi = \bigwedge(\delta_p\colon p \in F_\phi) .
\]
Indeed, $-\phi = \bigwedge(-\alpha_p\colon p \in K_\phi) = \bigwedge(\delta_p\colon p \in F_{-\phi})$; see the paragraph subsequent to Theorem \ref{impla1}.
Further, the discussion subsequent to Proposition \ref{kk2} and  Theorem \ref{impla1}(e) lead us to the conclusion that the transformation $p \mapsto \delta_p$ is an embedding of $A$ into the implication algebra $\CE$, while its range $\CE^\delta$ is an upwards closed subalgebra of $\CE$.

The subsequent characterization of implication algebras is suggested by Theorem 2.3 in \cite{Xu}, stated for BCK-algebras.

\begin{theorem}
Let $A$ be a Hilbert algebra. The following statements are equivalent:
\begin{myenum}
\item $A$ is an implication algebra,
\item the fixpoint set of every $\phi \in CE$ is a filter,
\item the fixpoint set of every $\phi \in CE^f$ is a filter,
\item the fixpoint set of every $\phi \in CE^\alpha$ is a filter.
\end{myenum}
\end{theorem}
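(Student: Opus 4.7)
The plan is to establish the cycle $(a)\Rightarrow(b)\Rightarrow(c)\Rightarrow(d)\Rightarrow(a)$. The first three implications are immediate: $(a)\Rightarrow(b)$ is just Theorem~\ref{impla1}(d) applied to each $\phi\in CE$, while $(b)\Rightarrow(c)$ and $(c)\Rightarrow(d)$ follow from the inclusions $CE^\alpha\subseteq CE^f\subseteq CE$, since a hypothesis that ranges over a larger family of closure endomorphisms automatically specialises to any subfamily.

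The substance lies in $(d)\Rightarrow(a)$. Fix arbitrary $a,b\in A$; I aim to verify the implication-algebra identity $(a\to b)\to a=a$. Working with the principal closure endomorphism $\phi:=\alpha_{a\to b}$, whose fixpoint set $F_\phi$ coincides with the range $\{(a\to b)\to z:z\in A\}$, hypothesis $(d)$ makes $F_\phi$ a filter. In particular the elements $c:=\phi(a)=(a\to b)\to a$ and $d:=\phi(b)=(a\to b)\to b$ both lie in $F_\phi$, and because $\phi$ is an endomorphism satisfying $\phi(a\to b)=1$ one has $c\le d$. Since $a\le c$ always, the target $(a\to b)\to a=a$ reduces to showing $a\in F_\phi$, and the idea is to produce, via closure of $F_\phi$ under modus ponens, witnesses $u, u\to a\in F_\phi$.

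The most natural choices collapse to circularity: for example $u=c$ forces $u\to a\in F_\phi$ to be equivalent to the very identity we wish to establish. The plan to break the deadlock is to invoke $(d)$ also for $\alpha_a$, so that $F_{\alpha_a}$ is a second filter at our disposal, and to exploit the Hilbert identity $((x\to y)\to y)\to y=x\to y$ with $x=a\to b$ and $y=a$, which yields $(c\to a)\to a=c$. Playing the two filters $F_{\alpha_{a\to b}}$ and $F_{\alpha_a}$ against each other through this identity should propagate the fixpoint property down to $a$. The principal obstacle I expect is precisely this orchestration of the two filter structures; as a fallback I would argue by contradiction, assuming $c>a$ strictly and locating a modus-ponens failure in some $F_{\alpha_p}$.
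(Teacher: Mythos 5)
Your reductions $(a)\Rightarrow(b)\Rightarrow(c)\Rightarrow(d)$ are fine and match the paper. But the substantive implication $(d)\Rightarrow(a)$ is not actually proved: you set up the problem, observe that the natural use of $F_{\alpha_{a\to b}}$ is circular, and then leave a plan (``playing the two filters against each other \ldots should propagate the fixpoint property down to $a$'') together with an admitted ``principal obstacle.'' A proposal that ends by naming the obstacle it has not overcome is a gap, not a proof.

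The missing step is in fact shorter than the orchestration you envisage, and needs only the single filter $F_{\alpha_a}$. Put $\phi:=\alpha_a$ and $c:=(a\to b.\to a)\to a$. Then $a\to b\in F_\phi$, because $a\to(a\to b)=a\to b$; and $(a\to b)\to c=1\in F_\phi$, because $x\le (x\to y)\to y$ gives $a\to b\le c$. Since $F_\phi$ is a filter by hypothesis $(d)$, closure under modus ponens yields $c\in F_\phi$, i.e.\ $c=a\to c$. But $a\to c=a\to(u\to a)=1$ for $u:=a\to b.\to a$, so $c=1$, i.e.\ $a\to b.\to a\le a$; with the trivial inequality $a\le a\to b.\to a$ this gives $a\to b.\to a=a$, which is the implication-algebra identity. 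Note that the target element to feed into the filter is not $a$ itself (your choice, which forces the circularity you observed) but the ``double negation'' $c=((a\to b)\to a)\to a$, and the filter to use is $F_{\alpha_a}$, not $F_{\alpha_{a\to b}}$; your fallback by contradiction is not needed.
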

\begin{proof}
For (a)$\to$(b), see Theorem \ref{impla1}(d). The transfers (b)$\to$(c)$\to$(d) are trivial. Now assume (d) and choose any $a,b \in A$. At first, $a \le a \to b. \to a$. Further, let $\phi := \alpha_a$ and $c := a \to b. \to a\di \to a$.
Clearly, $a \to b \in F_\phi$ and $a \to b. \to c = 1 \in F_\phi$. By the assumption, then $c \in F_\phi$, i.e., $c = a \to c = 1$. Hence,  $a \to b. \to a \le a$ and, eventually, $a \to b. \to a = a$. As $a$ and $b$ are arbitrary, it follows that $A$ is an implication algebra.
\end{proof}

\begin{theorem} \label{impla3}
If $A$ is an implication algebra, then $\CE^f$ is an ideal of the lattice $\CE$.
\end{theorem}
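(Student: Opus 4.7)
The plan is to verify the two defining properties of an ideal. First, $\CE^f$ is closed under finite joins in $\CE$ essentially by definition: a composition $\phi \circ \psi$ of two finitely generated closure endomorphisms is again a composition of finitely many principal ones, so $\CE^f$ is a subsemilattice with respect to the join operation $\circ$. (This is already noted in the paragraph introducing $\CE^f$.) So the whole task reduces to showing that $\CE^f$ is a down-set in $\CE$.

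For that, suppose $\phi \in \CE$ and $\psi \in \CE^f$ with $\phi \le \psi$. Write $\psi = \alpha_{p_1} \circ \alpha_{p_2} \circ \cdots \circ \alpha_{p_n}$ for some $p_1,\dots,p_n \in A$. The idea is to meet $\phi$ against this decomposition. Since $A$ is an implication algebra, Theorem \ref{impla1}(a) gives that $\CE = M$ is a Boolean lattice, and in particular distributive, so
\[
\phi = \phi \wedge \psi = \phi \wedge \bigl(\alpha_{p_1} \circ \cdots \circ \alpha_{p_n}\bigr) = (\phi \wedge \alpha_{p_1}) \circ \cdots \circ (\phi \wedge \alpha_{p_n}).
\]
Each factor $\phi \wedge \alpha_{p_i}$ is a closure endomorphism dominated by $\alpha_{p_i}$, so Theorem \ref{impla1}(e) applies and tells us $\phi \wedge \alpha_{p_i} = \alpha_{q_i}$ for $q_i := (\phi \wedge \alpha_{p_i})(p_i) \to p_i$. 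Consequently $\phi = \alpha_{q_1} \circ \cdots \circ \alpha_{q_n} \in \CE^f$, as required.

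The only step that requires a little care is the distributivity identity above; it is not a single axiom but the iterated form of binary distributivity $\phi \wedge (\psi \circ \chi) = (\phi \wedge \psi) \circ (\phi \wedge \chi)$, which holds in any distributive lattice and in particular in $\CE$. The main conceptual point, i.e., the ``hard part'', is really just the observation that principal closure endomorphisms form a down-set in $\CE$ for implication algebras (Theorem \ref{impla1}(e)); everything else is bookkeeping with the Boolean structure on $\CE$.
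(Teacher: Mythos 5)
Your proof is correct and follows essentially the same route as the paper: reduce to heredity, use distributivity of $\CE$ to write $\phi = (\phi \wedge \alpha_{p_1}) \circ \cdots \circ (\phi \wedge \alpha_{p_n})$, and apply Theorem \ref{impla1}(e) to each factor. The only cosmetic difference is that the paper also simplifies $(\phi \wedge \alpha_{p_i})(p_i) = \phi p_i$ to exhibit the generators explicitly as $\phi p_i \to p_i$, and treats the case $P = \varnothing$ separately.
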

\begin{proof}
As the set $\CE^f$ is closed under compositions, i.e., joins, it remains only to prove that $\CE^f$ is hereditary in $\CE$. Suppose that $\phi \in \CE$, and assume that $\phi \le \alpha_P$ for some finite $P \subseteq A$. Let $Q := \{\phi p \to p\colon p \in P\}$;  we shall show that $\phi = \alpha_Q$. 

If $P = \varnothing$, then $\phi x \le x$ for all $x$, and $\phi = \alpha_1$. Now consider the general case $P = \{p_1, p_2, \ldots, p_n\}$ with $n > 0$. As the lattice $CE$ is distributive,
\[
\phi = \phi \wedge \alpha_P = \phi \wedge (\alpha_{p_1} \circ \alpha_{p_2} \circ \cdots \circ \alpha_{p_n}) = \phi_1 \circ \phi_2 \circ \cdots \circ \phi_n,
 \]
 where $\phi_i = \phi \wedge \alpha_{p_i}$ for $i = 1,2,\ldots,p_n$. Since $\phi_i \le \alpha_{p_i}$, it follows from Theorem \ref{impla1}(e) that  all closure endomorphisms $\phi_i$ are principal; therefore, $\phi \in CE^f$. More specifically (see the proof of the mentioned theorem),  $\phi_i = \alpha_{\phi_ip_i \to p_i} = \alpha_{\phi p_i \to p_i}$; if $q_i := \phi p_i \to p_i$, then $\phi = \alpha_{q_1} \circ \alpha_{q_2} \circ \cdots \alpha_{q_n} = \alpha_Q$, as needed.
\end{proof}

Therefore, the adjoint semilattice of an implication algebra is in fact a sublattice of $\CE$, generated by the principal closure endomorphisms.


\begin{thebibliography}{8}
  \bibitem{B}
Birkhoff, G.:
Lattice Theory.
Amer.\ Math.\ Soc., Providence, Rhode Island (1967)
  \bibitem{glimpse}
Busneag D., Rudeanu, S.:
A glimpse of deductive systems in algebra.
Centr.\ Eur.\ J.\ Math.\ \textbf{8}, 688--705 (2010)
  \bibitem{free}
 Celani S.A, Jansana R.:
On the free implicative semilattice extension of a Hilbert algebra.
Math.\ Log.\ Quart.\ \textbf{58}, 188--207 (2012)
  \bibitem{Ch}
Chajda, I.:
The lattice of deductive systems of a Hilbert algebra.
Southeast Asian Bull.\ Math. \textbf{26}, 21--36 (2002)
    \bibitem{cle}
C\={\i}rulis J.:
On closure endomorphisms of implicative semilattices.
Bull.\ Sect.\ Log.\ (\L\'od\'z) \textbf{14}, 52--56 (1985)
  \bibitem{multimpl}
C\={\i}rulis J.:
Multipliers in implicative algebras.
Bull.\ Sect.\ Log.\ (\L\'od\'z) \textbf{15}, 152--158 (1986)
    \bibitem{mult}
C{\=\i}rulis, J.:
Multipliers, closure endomorphisms and quasi-decompositions of a Hilbert algebra.
In: Chajda, I. (ed.) et al., Contrib.\ Gen.\ Algebra \textbf{16}, Klagenfurt: Verlag Johannes Heyn, 25--34 (2005)
\bibitem{part}
C{\=\i}rulis, J.:
Hilbert algebras as implicative partial semilattices.
Centr.\ Eur.\ J.\ Math.\ \textbf{5}, 264--279 (2007)
    \bibitem{adj}
C{\=\i}rulis, J.:
Adjoint semilattice and minimal Brouwerian extensions of a Hilbert algebra.
Acta Univ.\ Palacki.\ Olomouc., Fac.\ rer.\ nat., Mathematica \textbf{51}, 41--51 (2012)
   \bibitem{Corn}
Cornish, W.H.:
A multiplier approach to implicative BCK-algebras.
Math.\ Seminar Notes (Kobe Univ.) \textbf{8}, 157--169 (1980)
    \bibitem{Cu}
Curry, H.B.:
Foundations of Mathematical Logic, 2nd ed.\
Dover Publications, Inc.\, New York (1977).
    \bibitem{Diego}
Diego, A.:
Sur les Alg\`ebres de Hilbert.
Gauther-Villars, E.~Nauwelaerts, Paris-Louvain (1966)
    \bibitem{Gait1}
Gait\'an, H.:
Endomorphisms of implication algebras.
Dem.\ Math. \textbf{47}, 284--288 (2014)
    \bibitem{Gait2}
Gait\'an, H.:
Congruences and closure endomorphisms of Hilbert algebras.
Commun.\ Algebra \textbf{43}, 1135--1145 (2015)
    \bibitem{HJ}
Hong S.M., Jun, Y.B.:
On deductive systems of Hilbert algebras.
Comm.\ Korean Math.\ Soc. \textbf{11}, 595-600 (1996)
   \bibitem{comon}
Katri\'n\'ak, T.:
Die Kennzeichnung der distributiven pseudokomplement\"aren Halbverb\"ande.
J.\ Reine Angew.\ Math.\ \textbf{241}, 160--179 (1970)
    \bibitem{Marsd1}
Marsden, E.L.:
Compatible elements in implicational models.
J. Philos.\ Log.\ \textbf{1}, 195--200 (1972)
    \bibitem{Marsd2}
Marsden, E.L.:
A note on implicative models.
Notre Dame J.\ Formal Logic \textbf{14}, 139--140 (1973)
    \bibitem{N}
Nemitz, W.C.:
Implicative semi-lattices.
Trans.\ Am.\ Math.\ Soc.\ \textbf{117}, 128--142 (1965)
    \bibitem{Porta}
Porta, H.:
Sur quelques alg\`ebres de la logique.
Portugal.\ Math.\ \textbf{40}, 41--47 (1981)
    \bibitem{Ras}
Rasiowa, H.:
An Algebraic  Approach to Non-Classical Logics.
PWN, North Holland Publ.\ Co., Warszawa, Amsterdam, London (1974)
    \bibitem{Sch0}
Schmidt, J.:
Zur Characterisiering der Kongruenzverb\"ande der Verb\"ande.
Matematick\'y {\v C}asopis, \textbf{18}, 3--20 (1968)
    \bibitem{binpairs}
Schmidt, J.:
Binomial pairs, semi-Brouwerian and Brouwerian semilattices.
Notre Dame J.\ Formal Log. \textbf{19}, 421--434 (1978)
    \bibitem{Sch2}
Schmidt, J.:
Relative pseudo-complements, join-extensions and meet-retractions.
Math.\ Z.\ \textbf{157}, 271--284 (1977)
    \bibitem{Tsi}
Tsinakis, C.:
Brouwerian semilattices determined by their endomorphism semigroups.
Houston J.\ Math.\ \textbf{5}, 427--436 (1979)
    \bibitem{Tsir}
Tsirulis, Ya.P.:
Remarks on closure endomorphisms of implicative semilattices (Russian).
Latvijskij Mat.\ Ezhegodnik \textbf{30}, Zinatne Publ., Riga, 136-149 (1986)
    \bibitem{Xu}
Xu, S.:
On the adjoint monoids of implicative BCK-algebras.
Southeast Asian Bull.\ Math.\ \textbf{26}, 535--540 (2002)
\end{thebibliography}
\end{document}